\newtheorem{theorem}{Theorem}[section]
\newtheorem{lemma}[theorem]{Lemma}
\newtheorem{corollary}[theorem]{Corollary}
\newtheorem{example}[theorem]{Example}
\newcommand{\LL}{{\cal L}}
\newcommand{\T}{{\cal T}}
\newcommand{\F}[1]{{\cal C}(#1)}
\DeclareMathOperator{\PG}{PG}
\date{}
\title{The chromatic number of a two families of generalized Kneser graphs related to finite generalized quadrangles and finite projective $3$-spaces}
\author{Klaus Metsch\thanks{Justus-Liebig-Universit\"{a}t, Mathematisches Institut,
Arndtstra{\ss}e 2, D-35392 Gie{\ss}en}}
\begin{document}
\maketitle

\begin{abstract}
Let $\Gamma$ be the graph whose vertices are the chambers of the finite projective space $\PG(3,q)$ with two vertices being adjacent when the corresponding chambers are in general position. It is known that the independence number of this graph is $(q^2+q+1)(q+1)^2$. For $q\ge 43$ we determine the largest independent set of $\Gamma$ and show that every maximal independent set that is not a largest one has at most constant times $q^3$ elements. For $q\ge 47$, this information is then used to show that $\Gamma$ has chromatic number $q^2+q$. Furthermore, for many families of generalized quadrangles we prove similar results for the graph that is built in the same way on the chambers of the generalized quadrangle.
\end{abstract}

\textbf{Keywords:} independence number, chromatic number, Erd\H os-Ko-Rado problem, Kneser graph

\textbf{MSC (2020):} 51E20, 05B25, 51E12

\section{Introduction}

A \emph{chamber} of a projective $3$-space is a set consisting of a point $P$, a line $\ell$ and a plane $\pi$ that are mutually incident. Two chambers $\{P,\ell,\pi\}$ and $\{P',\ell',\pi'\}$ are called \emph{opposite}, if $P$ does not lie in $\pi'$, if $\ell$ is skew to $\ell'$ and if $P'$ does not lie in $\pi$. We are interested in the graph whose vertices are chambers of a three dimensional projective space of finite order $q$ where two vertices are adjacent, if the corresponding chambers are opposite. It was shown in \cite{Sam} that the independence number of this graph is $(q^2+q+1)(q+1)^2$. For each point or plane $x$ the set of all chambers whose line is incident with $x$ is an independent set of this size. We denote this independent set by $\F{x}$.

A \emph{chamber} of a generalized quadrangle is a set consisting a point $P$ and a line $\ell$ that are incident. Two chambers $\{P,\ell\}$ and $\{P',\ell'\}$ are called \emph{opposite}, if $\ell$ and $\ell'$ have no common point and if there is no line containing $P$ and $P'$. A generalized quadrangle of order $(s,t)$ is called \emph{thick}, if $s,t\ge 2$. Again, we are interested in the graph whose vertices are chambers of a finite thick generalized quadrangle where two vertices are adjacent, if the corresponding chambers are opposite. We will determine the largest independent sets of these graphs and for many families of finite thick generalized quadrangles we also determine the chromatic number of this graph.

More generally one can consider flags of a certain type $S\subseteq\{1,2,\dots,n\}$ of a finite building of rank $n$ and define the graph whose vertices are these flags with two vertices being adjacent when the corresponding flags are in general position. For many buildings and many types $S$ , neither the independence number nor the chromatic number of these graphs are known. Here we focus on results on projective spaces or, equivalently, vector spaces. For $|S|=1$ one obtains the so called $q$-Kneser graphs and for these there are results on the independence number and the chromatic number, see \cite{Frankl&Wilson} and \cite{chromaticqKneser,BlokhuisHIltonMilner}. For $|S|>1$, the chromatic number is known only in a few cases, see \cite{lineplaneflags,Jozefien} and the references therein. For many sets $S$ with $|S|>1$, the independence number was determined in \cite{Sam} but neither the chromatic number nor the structure of the largest independent sets is known. This also applies to the graph defined above whose vertices are chambers of a projective 3-space of order $q$.

The independence number $(q^2+q+1)(q+1)^2$ of this graph was obtained using algebraic considerations. Here we use a geometric approach. This has the advantage, as is quite typical,  that we do not only determine the independence number but also classify the largest independent sets and moreover obtain a Hilton-Milner type result (a bound for the second largest maximal independent sets). The drawback is that our proof only gives interesting results for $q\ge 43$. One consequence of the Hilton-Milner type result in (b) of our first theorem below is that it enables us to determine the chromatic number of the graph in part (c).

\begin{theorem}\label{mainPG}
Let $\Gamma$ be the graph whose vertices are the chambers of a projective $3$-space of finite order $q$ with two vertices being adjacent when the corresponding chambers are in general position. Suppose that $q\ge 43$. Then we have.
\begin{enumerate}
\renewcommand{\labelenumi}{\rm (\alph{enumi})}
\item The largest independent sets of $\Gamma$ are the sets $\F{x}$ for points and planes $x$.
\item Every maximal independent set that is not of the form $\F{x}$ for a point or plane $x$ has at most $9(q+1)(5q^2+1)$ elements.
\item If $q\ge 47$, then the chromatic number of $\Gamma$ is $q^2+q$.
\end{enumerate}
\end{theorem}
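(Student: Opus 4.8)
The plan is to prove (c) by establishing the two bounds separately, using part (b) for the lower bound.

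\textbf{Upper bound $\chi(\Gamma)\le q^2+q$.} I would exhibit $q^2+q$ independent sets covering every chamber; assigning to each chamber one of the sets containing it then gives a proper colouring. Fix a plane $\pi_0$ and a line $m_0\subseteq\pi_0$. Take the $q^2$ sets $\F{P}$ with $P$ a point of $\pi_0$ not on $m_0$, together with the $q$ sets $\F{\sigma}$ with $\sigma$ a plane on $m_0$ different from $\pi_0$; there are indeed $q^2$ and $q$ of them, and each is independent since it has the form $\F{x}$. As a chamber $\{P,\ell,\pi\}$ lies in $\F{x}$ whenever $x$ is a point on $\ell$ or a plane on $\ell$, it suffices to check that every line $\ell$ either contains a point of $\pi_0\setminus m_0$ or lies in a plane on $m_0$ other than $\pi_0$. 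This I would do by cases: if $\ell\subseteq\pi_0$ then either $\ell=m_0$ (contained in every plane on $m_0$) or $\ell\ne m_0$ and $\ell$ has $q$ of its points in $\pi_0\setminus m_0$; if $\ell\not\subseteq\pi_0$ then $\ell$ meets $\pi_0$ in a single point, and if that point lies off $m_0$ we are done, while if it lies on $m_0$ then $\erz{\ell,m_0}$ is a plane on $m_0$, distinct from $\pi_0$, containing $\ell$.

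\textbf{Lower bound $\chi(\Gamma)\ge q^2+q$.} Assume for contradiction that there is a proper colouring of $\Gamma$ with $k\le q^2+q-1$ colours, and set $T=9(q+1)(5q^2+1)$. Call a colour class \emph{big} if it has more than $T$ elements. If $C$ is big, then a maximal independent set containing $C$ has more than $T$ elements, so by (b) it must be some $\F{x}$; hence $C\subseteq\F{x}$ for a point or plane $x$, and this $x$ is unique because any two distinct sets $\F{x},\F{x'}$ meet in at most $(q+1)^3<T$ chambers. Let $Y$ be the set of all these centres and put $y=|Y|$; since each element of $Y$ is the centre of a big class, $y$ is at most the number of big classes, so $y\le k\le q^2+q-1$ and there are at most $k-y$ small classes. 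Call a line \emph{bad} if it contains no point of $Y$ and lies in no plane of $Y$, and let $N$ be their number. A chamber whose line is bad lies in no $\F{x}$ with $x\in Y$, hence in no big class, hence in a small class; as a bad line carries $(q+1)^2$ chambers, this yields $N(q+1)^2\le(k-y)T\le(q^2+q-1-y)T$.

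To conclude I would prove a covering estimate in $\PG(3,q)$, valid for $q\ge 47$: \emph{a set $Y$ of $y\le q^2+q$ points and planes always leaves at least $\tfrac{9(5q^2+1)}{q+1}(q^2+q-y)$ bad lines.} Since $\tfrac{9(5q^2+1)}{q+1}=T/(q+1)^2$, this says $N(q+1)^2\ge(q^2+q-y)T$, which together with the previous inequality forces $q^2+q-y\le q^2+q-1-y$, a contradiction. Hence $k\ge q^2+q$, and with the upper bound $\chi(\Gamma)=q^2+q$.

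\textbf{The main obstacle.} Everything reduces to the covering estimate, and that is the hard part. Qualitatively it is the stable, quantitative statement that $q^2+q+1$ points or planes are genuinely needed to cover all lines of $\PG(3,q)$ — the extremal configurations being all points of a fixed plane, or dually all planes on a fixed point — and that being one short of a covering set already forces of the order of $q^2$ bad lines. I would attack it plane by plane: for a plane $\sigma$ containing no member of $Y$, a line of $\sigma$ is bad exactly when it avoids the point set $Y\cap\sigma$ and is not the trace $\pi\cap\sigma$ of a plane $\pi\in Y$, so the needed input is the sharp planar bound that a $t$-point set of $\PG(2,q)$ is missed by at least $q(q+1-t)$ lines, with equality precisely when the points are collinear, plus the fact that the planes of $Y$ remove at most $|Y|$ such lines from any one $\sigma$. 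Summing over all planes $\sigma\notin Y$, using that each bad line lies in exactly $q+1$ planes and in none belonging to $Y$, should give the bound. The delicate point is to organise this summation so that planes meeting $Y$ in many collinear points — which is exactly what happens near the almost-extremal configurations — do not erode the numerical factor needed to reach $\tfrac{9(5q^2+1)}{q+1}$, and this is presumably why the hypothesis $q\ge 47$ appears.
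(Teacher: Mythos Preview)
Your overall architecture matches the paper's exactly: the same upper-bound construction, and for the lower bound the same reduction via part (b) to a statement about how many lines of $\PG(3,q)$ can be ``covered'' by a small set of points and planes. The difference lies entirely in how the covering step is handled.

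You isolate the covering estimate as ``the main obstacle'' and propose to prove it from scratch by summing a planar external-line bound over planes $\sigma\notin Y$. The paper does \emph{not} do this; it simply quotes an existing result (Theorem~2.2 of \cite{MetschHowMany}): a set $A$ of points in $\PG(3,q)$ is met by at most $\theta_2+|A|q^2$ lines, where $\theta_2=q^2+q+1$. Applying this and its dual to the point-part $A$ and plane-part $B$ of your set $Y$ gives directly that at most $(2\theta_2+(|A|+|B|)q^2)(q+1)^2$ chambers lie in $\bigcup_{x\in Y}\F{x}$; the remaining chambers must come from the small colour classes, contributing at most $(\chi-|A|-|B|)\cdot T$. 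The paper then observes that, because $q^2(q+1)^2>T$ for $q\ge 47$, this combined upper bound is monotone increasing in $|A|+|B|$, so one may replace $|A|+|B|$ by $\chi$ and obtain $(q^2+1)\theta_2\le 2\theta_2+\chi q^2$, which forces $\chi>q^2+q-1$.

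So the ``missing idea'' you were looking for is precisely the cited line-covering bound; with it in hand the argument is a two-line computation and the plane-by-plane summation you sketch becomes unnecessary. Your version would likely go through as well, but the delicate balancing you anticipate (planes meeting $Y$ in many collinear points, the exact constant $9(5q^2+1)/(q+1)$) is avoided entirely by the paper's route. Note also that the paper organises the count as an \emph{upper} bound on chambers covered rather than a \emph{lower} bound on bad lines, and exploits monotonicity in $|A|+|B|$ rather than arguing by contradiction; this is what makes the threshold $q\ge 47$ appear so cleanly as the condition $q^2(q+1)^2>T$.
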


Our second result is on finite generalized quadrangles \cite{Payne&Thas}. An \emph{ovoid} of a generalized quadrangle is a subset of its point set that meets every line in a unique point, and, dually, a \emph{spread} of a generalized quadrangle is a subset of its line set that partitions the point set. For a generalized quadrangle of finite order $(s,t)$ we use the following notation. If $P$ is a point, then $\F{P}$ denotes the set of all chambers whose line is incident with $P$. Dually, if $\ell$ is a line, then $\F{\ell}$ denotes the set of all chambers whose point is incident with $\ell$. Clearly, these are independent sets of cardinality $(s+1)(t+1)$.

We remark that there is exactly one generalized quadrangle of order $(2,2)$. Its points and lines are the points and lines of the (unique) non-degenerate quadric of the 4-dimensional projective space $\PG(4,2)$ of order $2$.

\begin{example}\label{examp}
Consider the graph $\Gamma$ obtained from generalized quadrangle of order $(2,2)$ embedded in $\PG(4,2)$ and consisting of the points and lines of a parabolic quadric $Q(4,2)$ of $\PG(4,2)$. Consider a hyperbolic quadric $Q^+(3,2)$ contained in $Q(4,2)$ obtained by intersecting $Q(4,2)$ with a suitable $3$-space of $\PG(4,2)$. Then the set $X$ consisting of the nine chambers $\{P,\ell\}$ with $P$ a point of $Q^+(3,2)$ and $\ell$ the unique line of $Q(4,2)$ on $P$ that is not contained in $Q^+(3,2)$ is an independent set of $\Gamma$, since for any two such chambers $\{P,\ell\}$ and $\{P',\ell'\}$ either $P$ and $P'$ lie on a line of $Q^+(3,2)$ or otherwise $\ell$ is the unique line of $Q(4,2)$ on $P$ that meets $\ell'$.
\end{example}

\begin{theorem}\label{mainGQ}
Let $\Gamma$ be the graph whose vertices are the chambers of a thick generalized quadrangle of finite order $(s,t)$ with two vertices being adjacent if the corresponding chambers are in general position. Then we have.
\begin{enumerate}
\renewcommand{\labelenumi}{\rm (\alph{enumi})}
\item The independence number of $\Gamma$ is $\alpha(\Gamma)=(s+1)(t+1)$.
\item Then independent sets of $\Gamma$ of cardinality $\alpha(\Gamma)$ are the sets $\F{x}$ for a point or line $x$ and, if $(s,t)=(2,2)$, the independent sets constructed in Example \ref{examp}.
\item Every maximal independent set with less than $(s+1)(t+1)$ elements has at most $\max\{1+s+2t,1+t+2s\}$ elements.
\item If the generalized quadrangle has an ovoid or a spread, then $\Gamma$ has chromatic number $st+1$, otherwise it has chromatic number at least $st+2$.
\end{enumerate}
\end{theorem}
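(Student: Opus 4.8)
The plan is to derive parts (a), (b), (c) from a single classification of the maximal independent sets of $\Gamma$, and then to obtain (d) from that classification by a short counting argument. Throughout I use two standard properties of a thick generalized quadrangle $\mathcal Q$ of order $(s,t)$: it has no triangles, so three pairwise collinear points lie on a common line and, dually, three pairwise concurrent lines pass through a common point; and a point not on a line $y$ is collinear with exactly one point of $y$, equivalently exactly one line through it meets $y$. The whole set-up is self-dual (swapping $s$ and $t$, points and lines), so I argue up to duality. First I record the easy facts: every $\F{x}$ is independent (all its lines pass through the point $x$, or dually all its points lie on the line $x$), is in fact a \emph{maximal} independent set, has $(s+1)(t+1)$ chambers, and — needed for (d) — satisfies $\F{P}\cap\F{\ell}\neq\emptyset$ for every point $P$ and line $\ell$ (use the flag on $\ell$ through $P$ if $P\in\ell$, otherwise the flag on the unique point of $\ell$ collinear with $P$). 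The key tool is a counting lemma: if $\{P,\ell\}$ is a chamber and $z$ a point with $z\notin\ell$ (or $z$ a line with $P\notin z$), then at most $s+t+1$ chambers of $\F{z}$ are not opposite to $\{P,\ell\}$ — on the unique line through $z$ meeting $\ell$ there are at most $s+1$ of them, and on each of the other $t$ lines through $z$ at most one, the flag on the unique point of that line collinear with $P$; dually. Hence, for every independent $X$ and every point or line $z$, either $X\subseteq\F{z}$ or $|X\cap\F{z}|\le s+t+1$.

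Next I would prove the classification: a maximal independent set $X$ equals $\F{P}$ for a point $P$, or $\F{\ell}$ for a line $\ell$, or (only when $(s,t)=(2,2)$) one of the sets of Example~\ref{examp}, or else $|X|\le\max\{1+s+2t,\,1+t+2s\}$. Call two chambers $P$-related if their points coincide or are collinear, and $L$-related if their lines coincide or meet; independence says any two chambers of $X$ are $P$-related or $L$-related. If every pair of $X$ is $P$-related, the points occurring in $X$ are pairwise collinear, hence (no triangles) either lie on one line $\ell^*$, so that $X\subseteq\F{\ell^*}$ and $X=\F{\ell^*}$ by maximality, or number at most two, so $|X|\le 2(t+1)\le 1+s+2t$; dually if every pair is $L$-related. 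Otherwise pick $\{A_1,k_1\},\{A_2,k_2\}\in X$ with $A_1\not\sim A_2$ (so $k_1,k_2$ meet, in $B$), and $\{C_1,\ell_1\},\{C_2,\ell_2\}\in X$ with $\ell_1\cap\ell_2=\emptyset$ (so $C_1\sim C_2$, on a line $m$). Since each $\{P,\ell\}\in X$ is $P$- or $L$-related to $\{A_1,k_1\}$ and to $\{A_2,k_2\}$, either $P$ is collinear with both $A_1$ and $A_2$ (these chambers number at most $t+1$, since their points form a subset of the $(t+1)$-element pairwise non-collinear set $\{A_1,A_2\}^{\perp}$ and the map sending a chamber to its line is a bijection onto a set of pairwise concurrent, hence co-punctual, lines), or $P$ is collinear with exactly one $A_i$, in which case $\ell$ is forced to pass through the unique point of $k_{3-i}$ collinear with $P$ — so $\ell$ is determined by $P$ — or $P$ is collinear with neither, in which case $B\in\ell$ and this family lies in $\F{B}$, which, not containing all of $X$ (else all pairs would be $L$-related), contributes at most $s+t+1$ by the counting lemma. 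Running the dual analysis with $\{C_1,\ell_1\},\{C_2,\ell_2\}$ produces the second bound of the same shape, and intersecting the two four-part decompositions pins down the lines (resp. points) of the still-uncontrolled "exactly one" cells to boundedly many possibilities; summing the pieces gives $|X|\le\max\{1+s+2t,\,1+t+2s\}$ unless several of these bounds are simultaneously tight, and a direct analysis of that equality case forces $(s,t)=(2,2)$ and identifies $X$ with one of the sets of Example~\ref{examp}.

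Given the classification, (a) follows, since then no maximal independent set exceeds $(s+1)(t+1)$ while $\F{P}$ attains it; (b) and (c) are the two halves of the classification, separated by cardinality (a maximal set of size $(s+1)(t+1)$ must be some $\F{x}$ or a special $(2,2)$-set, and any smaller maximal set cannot be of those forms, so it obeys the linear bound). For (d): each colour class is independent of size at most $(s+1)(t+1)$ and there are $(s+1)(t+1)(st+1)$ vertices, so $\chi(\Gamma)\ge st+1$; a spread $\{\ell_1,\dots,\ell_{st+1}\}$ gives a proper $(st+1)$-colouring by colouring $\{P,k\}$ with the index $i$ for which $P\in\ell_i$ (colour class $i$ lies in $\F{\ell_i}$), and dually an ovoid does, so in those cases $\chi(\Gamma)=st+1$. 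Conversely suppose $\chi(\Gamma)=st+1$; then all $st+1$ classes have exactly $(s+1)(t+1)$ chambers and so are maximum independent sets. The quadrangle of order $(2,2)$ has an ovoid, so for the converse we may assume $(s,t)\neq(2,2)$, and then (b) says every class is some $\F{P}$ or some $\F{\ell}$; since a point-pencil and a line-pencil always meet, either all classes are line-pencils — and then, as each flag lies in exactly one class, the corresponding lines partition the points, a spread — or all are point-pencils, an ovoid. Hence, absent both a spread and an ovoid, $\chi(\Gamma)\ge st+2$.

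The crux, and the step I expect to cost the most work, is the "neither all $P$-related nor all $L$-related" case of the classification: extracting the clean linear bound $\max\{1+s+2t,\,1+t+2s\}$ from the intersection of the two four-part decompositions needs careful case-work — one must choose the four witness chambers so as to avoid degenerate coincidences among $A_1,A_2,C_1,C_2,B,m,k_i,\ell_i$, bound the mixed "exactly one $A_i$ and exactly one $\ell_j$" cells (whose chambers have their line, or point, pinned down), and, the real subtlety, determine exactly when enough of these bounds can be attained at once, which is precisely where the $(s,t)=(2,2)$ exception of Example~\ref{examp} surfaces.
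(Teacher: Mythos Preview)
Your scaffolding is sound: the counting lemma $|X\cap\F{z}|\le s+t+1$, part (a), and part (d) are correct and essentially match the paper (the Corollary after Theorem~\ref{Th_GQ} argues (d) exactly as you do, including the step that $\F{P}\cap\F{\ell}\neq\emptyset$ forces all $x_i$ to be of one type). One small slip: in your ``all pairs $P$-related'' case the alternative ``or number at most two'' never actually arises --- a set of pairwise collinear points in a generalized quadrangle is always contained in a line, so maximality already gives $X=\F{\ell^*}$ there.

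The genuine gap is the classification behind (b) and (c), where your approach also differs from the paper's. The paper does \emph{not} intersect two four-part decompositions built from four witness chambers. It first peels off two special cases --- some point or line lies in two chambers of $X$; or, failing that, some chamber's point lies on another chamber's line --- and disposes of each by a short direct argument yielding $|X|\le s+1+2t$ or $|X|\le 2s+1$. Only in the remaining ``generic'' situation (every point and line in at most one chamber, no cross-incidences) does it decompose, and then relative to a \emph{single} chamber $(P,\ell)$: $X\setminus\{(P,\ell)\}=X_1\cup X_2$, where $X_1$ has lines skew to $\ell$ and $X_2$ has points not collinear with $P$. The sharp bound comes from analysing $|X_2|$ according to the number $n_2$ of points of $\ell$ carrying a line of $X_2$, giving $|X_2|\le\max\{t+1,4,s\}$ and dually for $|X_1|$; the case $|X_1|=|X_2|=4$ is precisely where $(s,t)=(2,2)$ and Example~\ref{examp} emerge. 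Your proposed route leaves the (b)-cells --- ``$\ell$ determined by $P$'' and dually --- uncontrolled: knowing that a chamber is determined by either of its coordinates does not by itself bound how many such chambers there are, and in the extremal configurations attaining $1+s+2t$ (see the Remark after Theorem~\ref{Th_GQ}) most of $X$ sits exactly in those cells. To finish you would still need a structural parameter like the paper's $n_2$, so as written the crucial step is a plan rather than a proof.
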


Several families of classical generalized quadrangles have an ovoid or a spread and hence the chromatic numbers of the corresponding graphs are known.

\textsc{Remark}. The independent sets of graphs arising from flags in buildings, especially in projective spaces, are often called Erd\H os-Ko-Rado sets for historical reason. Information can be obtained from our reference list.

\section{Generalized quadrangles}

In this section we prove Theorem \ref{mainGQ}. Recall that a generalized quadrangle has order $(s,t)$, if every line has exactly $s+1$ points and if every point lies on exactly $t+1$ lines. We recall that $\F{x}$, for a point or a line $x$ of a generalized quadrangle, is a set of $(s+1)(t+1)$ chambers, no two of which are opposite. We call two lines of a generalized quadrangle \emph{skew} if they have no common point.

We remark that the generalized quadrangle in Example \ref{examp} is self-dual as is the set of the independent sets constructed there. Using the notation of the example, this follows easily from the fact that $Q(4,2)$ has six points that do not lie in $Q^+(3,2)$, each lying on exactly three lines, which all meet $Q^+(3,2)$ in three mutually non-collinear points.

\begin{theorem}\label{Th_GQ}
Let $X$ be a maximal set of mutually non-opposite chambers of a generalized quadrangle of order $(s,t)$. Then we have
\begin{enumerate}
\item $|X|\le (t+1)(s+1)$.
\item If $|X|=(t+1)(s+1)$, then either $X=\F{x}$ for a point or line $x$, or otherwise $s=t=2$ and $X$ is as in Example \ref{examp}.
\item If $X$ is a maximal set of chambers no two of which are opposite and if $|X|\not=(t+1)(s+1)$, then $|X|\le \max\{1+s+2t,1+t+2s\}$.
\end{enumerate}
\end{theorem}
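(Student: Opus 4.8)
The plan is to prove all three parts at once by classifying an arbitrary maximal set $X$ of pairwise non-opposite chambers: I claim that such an $X$ is either $\F{x}$ for a point or line $x$, or, only when $s=t=2$, the nine-chamber configuration of Example~\ref{examp}, or else satisfies $|X|\le\max\{1+s+2t,1+t+2s\}$. Granting this, part~(1) follows since every set of pairwise non-opposite chambers extends to a maximal one; part~(2) follows since $|\F{x}|=(s+1)(t+1)$ while $(s+1)(t+1)>\max\{1+s+2t,1+t+2s\}$ for all $s,t\ge 2$; and part~(3) follows because the configuration of Example~\ref{examp} has exactly $9=(s+1)(t+1)$ chambers when $(s,t)=(2,2)$. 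Throughout I use that two chambers $\{P,\ell\},\{P',\ell'\}$ are non-opposite exactly when $\ell,\ell'$ meet or coincide or $P,P'$ are collinear or coincide, and that a generalized quadrangle has no triangles; in particular three or more pairwise intersecting lines are concurrent, and dually three or more pairwise collinear points lie on a common line.

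\textbf{Step 1 (canonical maximal sets).} Suppose first that all lines occurring in chambers of $X$ pairwise meet. If at least three distinct lines occur they are concurrent in a point $P$, so $X\subseteq\F{P}$ and maximality gives $X=\F{P}$. If exactly two occur, meeting in a point $R$, then every chamber whose line passes through $R$ is non-opposite to all of $X$, so maximality forces $\F{R}\subseteq X$, contradicting that only two lines occur. If only one line $\ell$ occurs, then $X\subsetneq\F{\ell}$, again contradicting maximality. Thus if all these lines pairwise meet then $X=\F{P}$; dually, if all points occurring in chambers of $X$ are pairwise collinear then $X=\F{\ell}$. From now on we may therefore assume that $X$ contains two chambers $c_1=\{P_1,\ell_1\}$ and $c_2=\{P_2,\ell_2\}$ with $\ell_1$ skew to $\ell_2$ --- which forces $P_1,P_2$ collinear on a line $m\notin\{\ell_1,\ell_2\}$ --- and, dually, two chambers whose points are not collinear; moreover $X$ is not contained in $\F{m}$, so some chamber of $X$ has its point off $m$.

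\textbf{Step 2 (non-canonical maximal sets).} Testing an arbitrary chamber $\{Q,k\}\in X$ against $c_1$ and $c_2$, and applying the no-triangle property to the triple $Q,P_1,P_2$, one finds that $\{Q,k\}$ lies in one of four classes: (i)~$k$ is a transversal of $\ell_1$ and $\ell_2$; (ii)~$k$ meets $\ell_1$ and $Q$ is collinear with $P_2$; (iii)~$Q$ is collinear with $P_1$ and $k$ meets $\ell_2$; (iv)~$Q$ lies on $m$. There are exactly $s+1$ transversals of $\ell_1,\ell_2$, and the no-triangle property shows they are pairwise skew, so chambers of class~(i) with distinct lines have collinear points. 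Feeding this, the dual constraints coming from the non-collinear point pair, and the extra chamber with point off $m$ back into the classification pins down the admissible points and lines so tightly that the count yields $|X|\le\max\{1+s+2t,1+t+2s\}$ --- with a single exception when $(s,t)=(2,2)$, where the $s+1=3$ transversals together with the forced collinearities form a hyperbolic quadric $Q^+(3,2)$ inside $Q(4,2)$ and $X$ is the set of Example~\ref{examp}.

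The heart of the matter is Step~2, and the hard part is controlling the chambers of classes~(ii) and~(iii): a priori neither their lines nor their points are confined to a bounded set, and they must be controlled by simultaneously exploiting the constraints coming from $c_1$ and $c_2$, from the dual non-collinear pair, and from a chamber with point off $m$, invoking the absence of triangles repeatedly to remove the apparent freedom. The genuinely delicate point is that this collapse is \emph{not} complete when $s=t=2$ --- which is precisely why the extra configuration of Example~\ref{examp} appears --- so one has to verify both that this configuration really does arise for the quadrangle of order $(2,2)$ and that no analogue survives for any larger order, so that the bound $\max\{1+s+2t,1+t+2s\}$ holds in every remaining case.
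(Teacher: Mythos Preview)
Your decomposition differs from the paper's: where the paper splits according to whether some point or line occurs in two chambers of $X$ (its Case~1), then whether some incidence $P_1\in\ell_2$ occurs among distinct chambers (Case~2), and finally the generic situation (Case~3), you instead split according to whether all lines in $X$ pairwise meet. Your Step~1 is correct and pleasant, but it is not where the content lies.

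The genuine gap is Step~2. You set up four classes (i)--(iv) relative to two fixed chambers with skew lines and then assert that, after ``feeding in'' the dual non-collinear pair and an off-$m$ chamber, ``the count yields $|X|\le\max\{1+s+2t,1+t+2s\}$''. This is a description of a strategy, not a proof. Concretely: class~(iv) alone (point on $m$) already contains $\F{m}$, which has $(s+1)(t+1)$ chambers, so you cannot bound $|X|$ without a serious argument explaining why the overlap with the other classes and the extra constraints cut this down. Classes (ii) and (iii) are, as you yourself concede, the hard part, and you do not carry out any of the counting; nor do you show how the ``dual'' pair with non-collinear points interacts with the classes you defined relative to $c_1,c_2$. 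Finally, the exceptional $(s,t)=(2,2)$ case is simply asserted to be the unique survivor, with no argument that the nine-chamber grid configuration cannot persist for larger orders.

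By contrast, the paper's Case~3 fixes a \emph{single} reference chamber $(P,\ell)$ and partitions $X\setminus\{(P,\ell)\}$ into just two pieces $X_1$ (line skew to $\ell$, point collinear with $P$) and $X_2$ (line meets $\ell$, point non-collinear with $P$), then bounds each via parameters $n_1,n_2$ counting how many lines through $P$ (resp.\ points of $\ell$) are used; the delicate sub-case $n_2=2$ is where the $(2,2)$ exception emerges, and this is checked explicitly. Your four-class setup relative to two chambers could perhaps be pushed through, but nothing after the definition of (i)--(iv) in your write-up constitutes a proof.
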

\begin{proof}
For convenience we denote chambers in this proof by $(P,\ell)$ where $P$ is a point and $\ell$ a line on $P$.
We distinguish three cases.

Case 1. In this case we consider the situation when there exists a point or a line that occurs in at least two chambers of $X$.

Since the class of finite thick generalized quadrangles is self-dual and since the assertion of the theorem is self-dual, we may assume that there exists a line $\ell_0$ that occurs in two chambers $(P_0,\ell_0)$ and $(P'_0,\ell_0)$ of $X$. If $(P,\ell)$ is any chamber with $\ell\cap\ell_0=\emptyset$, then $P$ can be collinear to at most one of the two points $P_0$ and $P'_0$, hence $(P,\ell)$ is opposite to $(P_0,\ell_0)$ or $(P'_0,\ell_0)$, and thus $(P,\ell)$ is not a chamber of $X$. It follows that $\ell_0$ meets the lines of all chambers of $X$. Let $\LL$ be the set of all lines that are different from $\ell_0$ and occur in a chamber of $X$. Then every line of $\LL$ meets $\ell_0$ in a unique point.

Case 1a. We assume that some line $h\in\LL$ occurs in two chambers of $X$.

Let $Q$ be the point in which $h$ and $\ell_0$ meet. As before we see $h$ meets the line of every chamber of $X$. Since there are no triangles and since the line of every chamber of $X$ meets $h$ and $\ell_0$, then $Q$ lies in the line of every chamber of $X$. This implies that $X\subseteq\F{Q}$, so maximality of $X$ implies that $X=\F{Q}$. In this case we are done.

Case 1b. Now we consider the situation when every line $h$ of $\LL$ occurs in a unique chamber of $X$.

For $h\in\LL$ we denote by $P_h$ the point of $h$ with $(P_h,h)\in X $. If $P_h\in\ell_0$ for all $h\in\LL$, then $X\subseteq \F{\ell_0}$, so maximality of $X$ implies that $X=\F{\ell_0}$. We may therefore assume that there exists $h_1\in\LL$ with $P_{h_1}\not\in\ell_0$. Let $X_1$ be the point in which $h_1$ and $\ell_0$ meet. If the lines of $\LL$ meet $\ell_0$ either in $X_1$ or a second point of $\ell_0$, then $|\LL|\le 2t$, and hence $|X|\le s+1+2t$, since $\ell_0$ can occur in $s+1$ flags of $X$. In this case we are done. We may therefore assume that there are lines $h_2,h_3\in\LL$ such that the points $X_2=h_2\cap\ell_0$ and $X_3=h_3\cap\ell_0$ are distinct and distinct from $X_1$. Then any two of the lines $h_1$, $h_2$ and $h_3$ are skew and since no two of the chambers $(P_{h_i},h_i)$, $i=1,2,3$, are opposite, the points $P_{h_1}$, $P_{h_2}$ and $P_{h_3}$ are mutually collinear. Since a generalized quadrangle has no triangle, this implies that these three points are collinear, that is there exists a line $g$ containing these three points. As $P_{h_1}$ is not on $\ell_0$, then $g$ and $\ell_0$ are skew lines. If $h$ is any line of $\LL$, then $h$ meets $\ell_0$ and hence meets at most one of lines $h_1,h_2,h_3$, and consequently $P_h$ is collinear with at least two points of $g$, which implies $P_h\in g$. Hence $P_h\in g$ for all $h\in\LL$. Therefore each line of $\LL$ meets $g$ and $\ell_0$ and consequently $|\LL|\le s+1$. It follows $|X|\le s+1+|\LL|=2s+2$ and we are done.

Case 2. In this case we consider the situation that every point and every line occurs in at most one chamber of $X$, and that there exist flags $(P_1,\ell_1),(P_2,\ell_2)\in X$ with $P_1\in\ell_2$ and $P_2\not\in\ell_1$.

Consider $(P,\ell)\in X$. If $\ell$ is skew to $\ell_i$ for some $i\in\{1,2\}$, then $P$ and $P_i$ are collinear. Since a generalized quadrangle contains no triangles, this shows that $\ell$ can not be skew to $\ell_1$ and $\ell_2$.

Let $P'_1$ be a point of $\ell_1$ with $P'_1\not=P_1$. The hypothesis of the present case says that $(P'_1,\ell_1)\notin X$. Maximality of $X$ implies therefore that there exists a chamber $(Q,h)\in X$ that is opposite to $(P'_1,\ell_1)$ and hence $h$ is skew to $\ell_1$. Then $h$ meets $\ell_2$ in a point and since $(Q,h)$ and $(P_1,\ell_1)$ are not opposite we have that $Q=\ell_2\cap h$. Similarly every chamber of $X$ whose line is skew to $\ell_1$ has its point on $\ell_2$, and since every point of $\ell_2$ lies on at most one chamber of $X$, we see that $X$ contains at most $s-1$ chambers whose lines are skew to $\ell_1$.

Now we estimate the number of chambers of $X$ whose lines meet $\ell_1$ and which are distinct from $(P_1,\ell_1)$ and $(P_2,\ell_2)$. Let $(R,g)$ be such a chamber. Then $R\not=P_1,P_2$ and $g\not=\ell_1,\ell_2$. Assume that $P_1\in g$. Since $\ell_2$ is the line on $P_1$ that meets $h$, then $g$ and $h$ are skew. But then $(Q,h)$ and $(R,g)$ are opposite, which is impossible. Hence $g$ meets $\ell_1$ in a point $U$ other than $P_1$. Then $g$ and $\ell_2$ are skew and since $(R,g)$ and $(P_2,\ell_2)$ are not opposite, it follows that $R$ is the unique point on $g$ that is collinear to $P_2$. Then $R$ is not collinear to $Q$ and since $(Q,h)$ and $(R,g)$ are not opposite, it follows that $g$ meets $h$. It follows that $(R,g)$ is the only chamber of $X\setminus\{(P_1,\ell_1)\}$ whose line passes through $U$, namely $g$ is the line on $U$ that meets $h$ and $R$ is the point on $h$ collinear to $P_2$. Similarly, for every point of $\ell_1$ (other than $P_1$) there is at most one chamber in $X$ whose line is skew to $\ell_2$. Consequently $X$ contains at most $s$ chambers whose line is skew to $\ell_2$.

We have shown that $|X|\le 2s+1$ so we are done.

Case 3. In this case we consider the situation that for any two distinct chambers $(P_1,\ell_1),(P_2,\ell_2)\in X$ we have that $P_1\not\in\ell_2$ and $P_2\not\in\ell_1$ (and hence $P_1\not=P_2$ and $\ell_1\not=\ell_2$).

This implies for any two distinct flags of $X$ that either their lines are skew and their points are collinear or otherwise that their lines meet and their points are not collinear (that is, they are distinct from the intersection point of the two lines).

Consider $(P,\ell)\in X$, let $X_1$ be the set of chambers of $X$ whose lines are skew to $\ell$ and whose points are collinear with $P$, and let $X_2$ be the set of chambers of $X$ whose lines meet $\ell$ and whose points are not collinear with $P$. Then $X=X_1\cup X_2\cup\{(P,\ell)\}$.

Let $n_1$ be the number of lines on $P$ that contain the point of a chamber of $X_1$, and dually let $n_2$ be the number of points of $\ell_1$ that lie on the line of a chamber of $X_2$. Then $n_1\le t$ and $|X_1|\le n_1s$ and, dually, $n_2\le s$ and $|X_2|\le n_2t$.

We now improve this bound in the situation $n_2\ge 3$. In fact, if $n_2\ge 3$, then let $(Q_i,h_i)$, $i=1,2,3$, be in $X_2$ such that the three points $h_i\cap\ell$ are distinct. Then the lines $h_i$ are mutually skew, so the points $Q_i$ are mutually collinear and hence lie on a common line $g$. Clearly $g$ is skew to $\ell$. In this case for any $(Q,h)\in X_2$ we have that $h$ is skew to at least two of the lines $h_i$, hence $Q$ is collinear with at least two of the points $Q_i$ and thus $Q\in g$ and $h$ is the unique line on $Q$ that meets $\ell$. Since every point of $g$ lies on a unique line that meets $\ell$ and for one point of $g$ the intersection point with $\ell$ is the point $P$, we see that $|X_2|\le s$ in this case.

Hence we have $|X_2|\le 2t$ or $|X_2|\le s$. If $X_1=\emptyset$, then $|X|=1+|X_2|$ and we are done. We may thus assume that $X_1\not=\emptyset$.

We now improve the bound $|X_2|\le 2t$ when $n_2=2$. Assume for this that $n_2=2$, so there are two points $H_1,H_2$ on $\ell$ distinct from $P$ such that the line of every chamber of $X_2$ meets $\ell$ in $H_1$ or $H_2$. Also, both points $H_1$ and $H_2$ lie on the line of at least on chamber of $X_2$. For $i\in\{1,2\}$ let $c_i$ be the number of chambers in $X_2$ whose lines pass through $H_i$ and let these chambers be $(P_{ij},h_{ij})$ with $j=1,\dots,c_i$. Each line $h_{1j}$ is skew to each line $h_{2k}$ and therefore each point $P_{1j}$ is collinear with each point $P_{2k}$. Consider $(R,g)\in X_1$. Then for each $i\in\{1,2\}$ the line $g$ meets at most one line $h_{ij}$ and hence $R$ is collinear to at least $c_i-1$ of the points $P_{ij}$. If $R$ is collinear with $P_{1j}$ and $P_{2k}$, then these three points are pairwise collinear and hence there exists a line containing these three points. It follows that it is impossible that $c_1\ge 3$ and $c_2\ge 2$ or that $c_1\ge 2$ and $c_2\ge 3$. In fact, if $c_1$ were at least $3$ and $c_2$ were at least two, then $R$ would be collinear with at least two points $P_{1j}$ and $P_{1j'}$ and at least one point $P_{2k}$, so the three points $R,P_{1j},P_{2k}$ as well as the three points $R,P_{1j'},P_{2k}$ were collinear, which is impossible as $P_{1j}$ and $P_{1j'}$ are not collinear. Therefore either $c_1=c_2=2$ or $c_1=1$ or $c_2=1$. As $c_i\le t$, this implies that $|X_2|=4$ or $|X_2|\le t+1$.

Reviewing the bounds on $|X_2|$ for $n_2$ equal to 1, 2 or at least 3, we have $|X_2|\le\max\{t+1,4,s\}$. Dually we have $|X_1|\le\max\{s+1,4,t\}$. It follows that $|X|=1+|X_1|+|X_2|\le \max\{2s+t+1,2t+s+1\}$ except when $|X_1|=|X_2|=4$ and $s=t=2$.

Suppose finally that $|X_1|=|X_2|=4$ and $s=t=2$. Then $|X|=9=(s+1)(t+1)$. It follows that the four points that are collinear to $P$ but not on $\ell$ all occur as points of chambers of $X$, whereas the two points of $\ell$ other than $P$ do not occur in chambers of $X$. As $(P,\ell)$ was any chamber of $X$, the same holds for all chambers of $X$. Hence, if $T$ is the set of the nine points that occur in chambers of $X$, then each point of $T$ lies on exactly two lines all of whose points are contained in $T$. Since $|T|=|X|=9$, it follows that the points of $T$ must be the points of the six lines of a 3 by 3 grid. Since the six lines have more than one point in $T$, these lines do not occur in chambers of $X$, and hence $X$ consists of the nine chambers $(P,\ell)$ with $P\in T$ and $\ell$ the line on $P$ that is not in the grid. As the generalized quadrangle has order $(2,2)$ and is thus the parabolic quadrangle related to $Q(4,2)$, any 3 by 3 grid is an embedded hyperbolic quadric $Q^+(3,2)$. This shows that $X$ is as described in Example \ref{examp}.
\end{proof}

\textsc{Remark}:
There exist generalized quadrangles for which the bound in Theorem \ref{Th_GQ} (c) is sharp. To see this consider the parabolic quadrangle of order $(s,t)=(q,q)$, an embedded hyperbolic quadrangle of order $(q,1)$, let $Q$ be one of the points and $h,h'$ the two lines of the hyperbolic quadrangle on this point. Let $X$ be the set of all chambers $(P,\ell)$ where either $P=Q$ or otherwise $P$ is a point other than $Q$ on $h$ or $h'$ and $\ell$ is the line of the hyperbolic quadric on $P$ that is distinct from $h$ and $h'$. Then no two chambers of $X$ are opposite and $|X|=t+1+2s$.

We recall that ovoids and spreads of generalized quadrangles $G$ have been defined in the introduction. If $G$ has finite order $(s,t)$, then it has $(s+1)(st+1)$ points and since every line has $s+1$ points, it follows that spreads of $G$ have $st+1$ lines. Dually, an ovoid of $G$ consists of $st+1$ points.

\begin{corollary}
Let $G$ be a finite thick generalized quadrangle of order $(s,t)$ and let $\Gamma$ be the graph whose vertices are the chambers of $G$ with two vertices adjacent if and only if the corresponding chambers are opposite. Then we have.
\begin{enumerate}
\renewcommand{\labelenumi}{\rm (\alph{enumi})}
\item The independence number of $\Gamma$ is $(s+1)(t+1)$.
\item The chromatic number of $\Gamma$ is at least $st+1$.
\item The chromatic number of $\Gamma$ is $st+1$ if and only if $G$ possesses a spread or an ovoid.
\end{enumerate}
\end{corollary}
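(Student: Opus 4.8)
The plan is to derive all three parts from Theorem~\ref{Th_GQ} together with an elementary count of the chambers. A generalized quadrangle of order $(s,t)$ has $(1+s)(1+st)$ points, each incident with $t+1$ lines, so $\Gamma$ has exactly $(1+s)(1+t)(1+st)$ vertices. Part (a) is then immediate: every $\F{x}$ is an independent set with $(s+1)(t+1)$ vertices, and Theorem~\ref{Th_GQ}~(1) rules out anything larger. For (b), a colour class is an independent set, hence has at most $(s+1)(t+1)$ vertices by (a), so at least $(1+s)(1+t)(1+st)/\bigl((s+1)(t+1)\bigr)=st+1$ colours are needed.

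For the ``if'' direction of (c), suppose first that $G$ has a spread $\mathcal S$. The point of every chamber lies on a unique line of $\mathcal S$, so the $st+1$ sets $\F{m}$ with $m\in\mathcal S$ partition the vertex set of $\Gamma$ into independent sets; with (b) this gives $\chi(\Gamma)=st+1$. If instead $G$ has an ovoid $\mathcal O$, then the line of every chamber meets $\mathcal O$ in a unique point, so the sets $\F{P}$ with $P\in\mathcal O$ do the same job.

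For the ``only if'' direction, assume $\chi(\Gamma)=st+1$ and fix a proper colouring with $st+1$ colours. Since $\Gamma$ has $(st+1)(s+1)(t+1)$ vertices and each colour class has at most $(s+1)(t+1)$ of them, every colour class has exactly $(s+1)(t+1)$ vertices, hence is a maximum --- and in particular maximal --- independent set. If $(s,t)=(2,2)$, then $G$ is the quadrangle $Q(4,2)$, which has an ovoid (the intersection of $Q(4,2)$ with a suitable hyperplane is an elliptic quadric $Q^-(3,2)$ no two of whose points are collinear on $Q(4,2)$; see~\cite{Payne&Thas}), so the conclusion holds in that case. Otherwise Theorem~\ref{Th_GQ}~(2) shows that each colour class equals $\F{x}$ for a point or a line $x$. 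Now I would record the intersection pattern of these sets: for a point $P$ and a line $\ell$, the sets $\F{P}$ and $\F{\ell}$ always meet (by the chamber $(P,\ell)$ if $P\in\ell$, and by $(R,PR)$ otherwise, where $R$ is the unique point of $\ell$ collinear with $P$); two sets $\F{P},\F{P'}$ with $P\ne P'$ are disjoint precisely when $P$ and $P'$ are non-collinear; and two sets $\F{\ell},\F{\ell'}$ with $\ell\ne\ell'$ are disjoint precisely when $\ell$ and $\ell'$ are skew. As the colour classes are pairwise disjoint, they cannot include both a set of point type and one of line type, so either all colour classes are of the form $\F{P_i}$ (with $P_1,\dots,P_{st+1}$ mutually non-collinear) or all are of the form $\F{\ell_i}$ (with $\ell_1,\dots,\ell_{st+1}$ mutually skew). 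In the first case, fix a line $m$ of $G$: the $s+1$ chambers $(R,m)$ with $R\in m$ must be covered, and $(R,m)\in\F{P_i}$ holds iff $P_i\in m$, so at least one $P_i$ lies on $m$, while disjointness forbids two of them on $m$. Hence $\{P_1,\dots,P_{st+1}\}$ meets every line of $G$ in exactly one point and so is an ovoid; dually, in the second case the $\ell_i$ form a spread.

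The substantive step is this last one: converting ``the sets $\F{x}$ partition the chambers'' into ``$G$ has an ovoid or a spread'' via the intersection bookkeeping. Everything else is formal once Theorem~\ref{Th_GQ} is available; the only extra care needed is the small quadrangle $Q(4,2)$, which must be treated separately because of the exceptional extremal sets of Example~\ref{examp}, but for which the statement holds trivially since $Q(4,2)$ possesses an ovoid.
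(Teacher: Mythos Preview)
Your proof is correct and follows essentially the same approach as the paper: both deduce (a) and (b) from Theorem~\ref{Th_GQ} together with the vertex count $(s+1)(t+1)(st+1)$, and for (c) both handle the $(2,2)$ case separately via the known ovoid of $Q(4,2)$, then use Theorem~\ref{Th_GQ}~(2) to write each colour class as some $\F{x}$, observe that $\F{P}\cap\F{\ell}\neq\emptyset$ always, and conclude that the $x_i$ are either all points forming an ovoid or all lines forming a spread. The only cosmetic difference is that you verify the ovoid property by covering each line with a chamber, whereas the paper simply notes that $st+1$ pairwise non-collinear points constitute an ovoid.
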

\begin{proof}
(a) Theorem \ref{Th_GQ} shows that the independence number of $\Gamma$ is at most $(s+1)(t+1)$. Since the set $\F{P}$ for a point $P$ is an independent set with $(s+1)(t+1)$ elements, this proves (a).

(b) The number of points of $G$ is $(s+1)(st+1)$ and hence the number of chambers of $G$ is $(s+1)(t+1)(st+1)$, that is $\Gamma$ has this many vertices. Therefore (a) implies that the chromatic number of $\Gamma$ is at least $st+1$ with equality if and only if $\Gamma$ has $st+1$ mutually disjoint independent sets of cardinality $(s+1)(t+1)$.

If $G$ has an ovoid $\cal O$, then the independent sets $\F{P}$ with $P\in\cal O$ are mutually disjoint and hence the chromatic number is $st+1$. Dually, if $G$ has a spread, then the chromatic number is $st+1$.

Conversely assume that the chromatic number of $\Gamma$ is $st+1$, that is $\Gamma$ has mutually disjoint independent sets $F_i$, $i=1,\dots,st+1$, of cardinality $(s+1)(t+1)$. We have to show that $G$ has a spread or an ovoid. As the unique generalized quadrangle of order $(2,2)$ has an ovoid, we may assume that $(s,t)\not=(2,2)$. Then Theorem \ref{Th_GQ} shows for $1\le i\le st+1$ that there exists a point or line $x_i$ with $F_i=\F{x_i}$. However, for any choice of a point $P$ and a line $\ell$, there exists a line $h$ on $P$ that contains a point $Q$ of $\ell$, and hence $(Q,h)\in\F{P}\cap\F{\ell}$. Since the independent sets $F_i$ are mutually disjoint, this shows that the $x_i$ are all points or they are all lines. If they are all points, then these points are mutually non-collinear, since the sets $\F{x_i}$ are disjoint. In this case, the set $T:=\{x_i\mid 1\le i\le st+1\}$ is an ovoid. Dually, if all $x_i$ are lines, then $T$ is a spread.
\end{proof}

\textsc{Remark}. For most known finite generalized quadrangles it is known that they either have a spread or an ovoid, for information we refer to \cite{Payne&Thas}. This is true for all families of classical generalized quadrangles related to finite polar spaces of rank two with one exception. For the generalized quadrangle of order $(q^2,q^3)$ consisting of the points and lines of a hermitian polar space $H(4,q^2)$ it is known that this generalized quadrangle has no ovoid. Brouwer \cite{Brouwer} showed that it has no spread when $q=2$, so the related graph whose vertices are the chambers of $H(4,4)$ as above has chromatic number $\chi$ at least $2+ q^2\cdot q^3=34$. We mention that $\chi\le 36$, since given a point $p$ of $H(4,4)$, then the union of the $36$ independent sets $\F{x}$ with $x\not=p$ and $x$ collinear with $p$ contains all chambers of $H(4,4)$. Hence $\chi\in\{34,35,36\}$.  For $q>2$ the existence of spreads of $H(4,q^2)$ is open and therefore the chromatic number of the related graph remains open.

\section{A stability result}

In this section we prove parts (a) and (b) of Theorem \ref{mainPG}. Therefore we consider chambers of the finite projective $3$-space $\PG(3,q)$ of order $q$ and we are interested in large sets of chambers that are mutually non-opposite. As already mentioned in the introduction, it follows from \cite{Sam} that such a set has at most $(q^2+q+1)(q+1)^2$ elements. Moreover, for each point or plane $x$ of $\PG(3,q)$, the set $\F{x}$ consisting of all chambers whose line is incident with $x$ is such a set and has exactly $(q^2+q+1)(q+1)^2$ elements. We shall see for $q\ge 43$ that these are the only examples meeting the bound.

\begin{theorem}\label{stabilityA3}
Let $X$ be a maximal family of pairwise non-opposite chambers of $PG(3,q)$, $q\ge 43$. Then either $X=\F{x}$ for a point or plane $x$, or $|X|\le 9(q+1)(5q^2+1)$.
\end{theorem}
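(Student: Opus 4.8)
The proof will parallel the strategy used for generalized quadrangles in Theorem~\ref{Th_GQ}, but the combinatorics in $\PG(3,q)$ is richer because a chamber now involves a point, a line \emph{and} a plane. The guiding principle is a dichotomy: either some low-dimensional object (a point, a line, or a plane) is ``heavily used'' by the chambers of $X$ in the sense that $\F{x}$ absorbs all of $X$ (forcing $X=\F{x}$ by maximality), or else no such heavy object exists and we can bound $|X|$ by a cubic polynomial in $q$ using counting arguments. Concretely, I would first study, for a fixed chamber $\{P,\ell,\pi\}\in X$, the constraints that non-oppositeness places on another chamber $\{P',\ell',\pi'\}\in X$: at least one of the three incidence-type conditions $P\in\pi'$, $\ell\cap\ell'\neq\emptyset$, $P'\in\pi$ must fail to be ``general position''. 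This partitions $X\setminus\{\{P,\ell,\pi\}\}$ into (overlapping) classes according to which coincidences occur, and each class can be controlled.

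The key structural step is to handle the case where the \emph{lines} of the chambers of $X$ behave like an Erd\H os--Ko--Rado family of lines in $\PG(3,q)$. If many chambers of $X$ have mutually intersecting lines, then by the classical EKR-type analysis for lines in $\PG(3,q)$ (all lines through a common point, or all lines in a common plane) one expects either a point $P$ or a plane $\pi$ through which (almost) all these lines pass; pushing on maximality and the plane/point components of the chambers should then force $X=\F{x}$. If instead the lines do not concentrate, one shows that $X$ splits into a bounded number of ``pencils'' of lines (lines through a point, lines in a plane), each pencil contributing $O(q)$ lines, and for each line only $O(q)$ chambers of $X$ can use it (since the point and plane of a chamber containing a fixed line $\ell$ lie on $\ell$ and through $\ell$ respectively, giving at most $(q+1)^2$ chambers per line, but the non-oppositeness constraints with a few fixed ``reference'' chambers cut this down). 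Multiplying the number of lines in play by the number of chambers per line yields the $O(q^3)$ bound; the explicit constant $9\cdot 5=45$ in front of $q^3$ suggests the case analysis produces on the order of a handful of pencils, each of size $\le q+1$, with $\le 5q^2+1$-ish chambers attributable to each, times a factor $9$ from combining the dual roles of points and planes.

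The main obstacle I anticipate is the middle-dimensional interaction: in a generalized quadrangle a chamber is just an incident point-line pair, but here the plane $\pi$ adds a third coordinate whose dual role must be tracked symmetrically with the point $P$. One must be careful that ruling out concentration of \emph{lines} still leaves genuine freedom in the planes (and dually), and the estimates in the ``no concentration'' case require simultaneously bounding three nested parameters (how many planes, how many lines per plane, how many chambers per line). Getting all three bounds to multiply out below $9(q+1)(5q^2+1)$ — rather than something like $q^4$ — is where the argument has to be tight, and it is presumably the reason the theorem needs $q\ge 43$: several inequalities of the form ``$aq^2 > bq + c$'' must hold to force the collapse to $\F{x}$ rather than a large sporadic configuration. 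I would organize the write-up as: (i) reduce to a fixed reference chamber and classify coincidences; (ii) the line-concentration case, yielding $X=\F{x}$; (iii) the no-concentration case, yielding the cubic bound; and finally (iv) verify the numerics for $q\ge 43$, invoking Theorem~\ref{mainPG}(a) or the bound from \cite{Sam} only where the independence number itself is needed.
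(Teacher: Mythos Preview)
Your high-level dichotomy is the right one and matches the paper: either the lines of the chambers of $X$ pairwise meet, forcing them all through a common point or into a common plane so that maximality gives $X=\F{x}$, or else $X$ contains chambers with skew lines and one must extract a cubic bound. Where your proposal diverges, and where I think it has a genuine gap, is in the second branch.

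You propose that in the no-concentration case $X$ ``splits into a bounded number of pencils of lines, each pencil contributing $O(q)$ lines'', and then to multiply by an $O(q)$ bound on chambers per line. Neither of these two factors is available. First, even when concentration fails there is no structural reason for the lines of $X$ to fall into boundedly many pencils: non-oppositeness of $(P,\ell,\pi)$ and $(P',\ell',\pi')$ only says $P\in\pi'$ or $\ell\cap\ell'\neq\emptyset$ or $P'\in\pi$, so the line $\ell'$ is essentially unconstrained once the point--plane incidences do the work. Second, for a fixed line $\ell$ all $(q+1)^2$ chambers with that line are pairwise non-opposite, so ``$O(q)$ chambers per line'' cannot be squeezed out of a few reference chambers in the way you suggest.

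The paper's argument for the skew case is rather different in spirit. The central technical lemma (Lemma~\ref{wievieletreffenl0}) bounds, for any single line $\ell$, the number of chambers of $X$ whose line \emph{meets} $\ell$ by $c\approx 11q^3$; this is itself nontrivial and uses a careful case split on whether one can find two further chambers of $X$ with skew lines missing a given point of $\ell$ (Lemma~\ref{wievieletreffenl0Teil1}). With this lemma in hand, once two chambers of $X$ have skew lines one uses $|X|>2c$ to locate a \emph{third} chamber whose line is skew to both, and then argues relative to the three mutually skew reference chambers: at most $3c$ chambers of $X$ have a line meeting one of $\ell_1,\ell_2,\ell_3$, and any remaining chamber, having its line skew to all three, must satisfy the point--plane incidences $Q_i\in\pi$ or $Q\in\pi_i$ for each $i$, hence by pigeonhole its plane contains two of the $Q_i$ or its point lies on two of the lines $\pi_i\cap\pi_j$. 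Lemma~\ref{incidentmitGerade} then bounds each of these six residual classes by $d=(q+1)q^2+(q+1)^3$, and $3c+6d=9(q+1)(5q^2+1)$. The ``three skew reference lines plus pigeonhole on the point--plane incidences'' idea is what your plan is missing; without it I do not see how your pencil-counting yields anything below $q^4$. (Incidentally, the independence number from \cite{Sam} is not used anywhere in this proof.)
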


The proof will be prepared in several lemmata. Throughout this section we assume that $X$ is a maximal set of mutually non-opposite chambers of $\PG(3,q)$. For simplicity, we denote a chamber in this section as an ordered triple $(P,\ell,\pi)$ with a point $P$, a line $\ell$ and a plane $\pi$ of $\PG(3,q)$ that are pairwise incident.

\begin{lemma}\label{EigenschaftenPunkt}
Let $R$ be a point or a plane with $X\not=\F{R}$. Then $|X\cap\F{R}|\le (q+1)^3+q^2(2q+1)$.
\end{lemma}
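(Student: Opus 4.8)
We want to bound $|X \cap \F{R}|$ for a point (or, dually, plane) $R$ when $X$ is not all of $\F{R}$. The set $\F{R}$ consists of all chambers $(P,\ell,\pi)$ with $R \in \ell$, which splits naturally according to whether $P = R$ or $P \neq R$. Write $X_0$ for the chambers of $X \cap \F{R}$ with point $R$ and $X_1$ for those with $P \neq R$; so $|X \cap \F{R}| = |X_0| + |X_1|$. The chambers in $X_0$ are triples $(R,\ell,\pi)$ with $R \in \ell \subset \pi$; these correspond to flags (line, plane) in the quotient projective plane $\PG(3,q)/R$, of which there are $(q+1)(q^2+q+1)$ in total. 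For the chambers in $X_1$, the line $\ell$ passes through $R$, so it lives in the star of $R$; there are $q^2+q+1$ such lines, and on each such line $\ell$ there are $q$ choices of point $P \neq R$ and $q+1$ planes $\pi \supset \ell$, giving $q \cdot q(q+1)(q^2+q+1)$ chambers in $\F{R}$ of this second type. The target bound $(q+1)^3 + q^2(2q+1)$ is far smaller than either piece taken crudely, so the non-oppositeness constraint, together with maximality of $X$, must be exploited on both parts.

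**Key steps.** First I would analyze $X_1$. Fix a line $\ell$ through $R$ that occurs in some chamber of $X_1$, and consider two chambers $(P,\ell,\pi)$, $(P',\ell',\pi')$ of $X_1$ with $\ell \neq \ell'$: since $\ell$ and $\ell'$ both pass through $R$ they are not skew, so the obstruction to oppositeness must come from a point/plane incidence — either $P \in \pi'$ or $P' \in \pi$. This is a strong restriction. In fact I expect that for a generic line $\ell$ through $R$ the chambers of $X_1$ on $\ell$ force their planes to pass through a common line or their points to be confined to a common plane, by a triangle-free-type argument analogous to Case 1b and Case 3 of Theorem \ref{Th_GQ} in the quotient/residue geometry — three "independent" chambers on distinct lines through $R$ would pin everything down. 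So the count of chambers in $X_1$ on each line $\ell$ through $R$ should be $O(q)$ rather than $O(q^2)$, except possibly for a bounded number of exceptional lines where it can be as large as $q(q+1)$; summing over the $q^2+q+1$ lines through $R$ gives something like $q^2(2q+1)$ after optimizing, which matches the second summand. Second, for $X_0$, the chambers $(R,\ell,\pi)$ are governed entirely by the residue plane at $R$: two such chambers are non-opposite automatically through the point $R$ on one side, so the only constraint is among the $(\ell,\pi)$ pairs themselves when paired against chambers of $X_1$ or against each other via the plane side. Here I would argue that $X_0$ alone, if unconstrained, has $(q+1)(q^2+q+1)$ elements, but that whenever $X \neq \F{R}$ there is a chamber of $X$ outside $\F{R}$ whose existence (via maximality, it is forced in because $X$ is maximal) cuts $X_0$ down; alternatively the interaction between $X_0$ and $X_1$ inside $X$ forces $|X_0| \le (q+1)^3$ roughly, i.e.\ $X_0$ behaves like a set of non-opposite flags in a projective plane residue, where $(q+1)^2$-ish per pencil times $(q+1)$ pencils is the ceiling. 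Combining the two estimates yields $|X \cap \F{R}| \le (q+1)^3 + q^2(2q+1)$.

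**The main obstacle.** The delicate point will be step one: showing that all but a bounded number of lines $\ell$ through $R$ carry only $O(q)$ chambers of $X_1$, and pinning down the exact constant so that the sum comes out to precisely $q^2(2q+1)$ rather than a larger multiple of $q^3$. This requires carefully setting up the "opposite" relation for chambers whose lines share the point $R$ and running a triangle-free argument inside $\PG(3,q)$ that is one dimension more complicated than the generalized-quadrangle case: when two lines meet at $R$, their chambers' points and planes still have a plane in common (the plane spanned by $\ell$ and $\ell'$, if the two lines are coplanar) or not, and one must split on the geometry of how the lines through $R$ sit relative to each other. I would expect the bookkeeping to mirror the three-case structure of the proof of Theorem \ref{Th_GQ}, but applied to the two-flags-type of a projective $3$-space, and the arithmetic to require the hypothesis $q \ge 43$ only at the very end when comparing the stated bound against the trivial bound to conclude $X$ is not forced to be $\F{R}$.
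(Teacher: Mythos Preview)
Your plan rests on a mistaken reading of the opposition relation. Two chambers $(P,\ell,\pi)$ and $(P',\ell',\pi')$ are \emph{opposite} only when \emph{all three} conditions hold: $P\notin\pi'$, $\ell\cap\ell'=\emptyset$, and $P'\notin\pi$. Hence any two chambers whose lines meet are automatically non-opposite. Since every chamber in $\F{R}$ has its line through $R$, all pairs of chambers inside $\F{R}$ are non-opposite with no further condition. Your claimed constraint ``either $P\in\pi'$ or $P'\in\pi$'' between two chambers of $X_1$ is therefore vacuous, and the entire triangle-free bookkeeping you outline for bounding $|X_1|$ has no content: $X\cap\F{R}$ could, as far as internal constraints go, be all of $\F{R}$.

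The paper's argument is a one-step subtraction and uses exactly the piece of information your plan never touches: a chamber of $X$ \emph{outside} $\F{R}$. Since $X$ is maximal and $X\neq\F{R}$, there is some $(Q,h,\tau)\in X\setminus\F{R}$; then $R$ is not incident with $h$. There are $q^2$ lines $\ell$ through $R$ skew to $h$, and for each such $\ell$ there are $q$ points $P\in\ell\setminus\tau$ and $q$ planes $\pi\supset\ell$ with $Q\notin\pi$. This produces $q^4$ chambers of $\F{R}$ opposite to $(Q,h,\tau)$, none of which can lie in $X$. Thus
\[
|X\cap\F{R}|\le |\F{R}|-q^4=(q^2+q+1)(q+1)^2-q^4=(q+1)^3+q^2(2q+1).
\]
No case analysis, no residue geometry, and no use of $q\ge 43$ is required.
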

\begin{proof}
As $X$ is maximal, there exists a chamber $(Q,h,\tau)$ in $X\setminus \F{R}$. Then $R$ is not incident with $h$ and hence $R$ is incident with $q^2$ lines $\ell$ that are skew to $h$. Each such line $\ell$ lies in $q$ planes $\pi$ that do not contain the point $Q$, and each such line $\ell$ contains $q$ points $P$ that do not lie in $\tau$. This gives $q^2\cdot q\cdot q=q^4$ chambers $(P,\ell,\pi)$ of $\F{R}$ that are opposite to $(Q,h,\tau)$ and hence do not lie in $X$. Therefore $|X\cap\F{R}|\le (q^2+q+1)(q+1)^2-q^4$.
\end{proof}

\begin{lemma}\label{incidentmitGerade}
Let $\ell$ be a line.
\begin{enumerate}\renewcommand{\labelenumi}{\rm (\alph{enumi})}

\item At most $(q+1)^3$ chambers $(Q,h,\tau)$ of $X$ satisfy $Q\in \ell\subseteq \tau$.

\item At most $(q+1)q^2$ chambers $(Q,h,\tau)$ of $X$ satisfy $Q\notin\ell$ and $\ell\subseteq \tau$.

\item At most $(q+1)q^2$ chambers $(Q,h,\tau)$ of $X$ satisfy $Q\in\ell$ and $\ell\not\subseteq \tau$.
\end{enumerate}
\end{lemma}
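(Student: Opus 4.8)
The plan is to treat the three parts separately, obtaining (c) from (b) by duality. Part (a) requires only a count of \emph{all} chambers of the stated shape: a chamber $(Q,h,\tau)$ with $Q\in\ell$ and $\ell\subseteq\tau$ is determined by a choice of the point $Q$ on $\ell$, of the plane $\tau$ through $\ell$, and of the line $h$ through $Q$ inside $\tau$, each of which can be made in $q+1$ ways; so there are exactly $(q+1)^3$ such chambers in $\PG(3,q)$ and hence at most that many in $X$.

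For (b) I would first observe that every relevant chamber $(Q,h,\tau)$ has $h\subseteq\tau$ and, since $Q\notin\ell$, also $h\neq\ell$, so $h$ meets $\ell$ in a unique point. Then I would group the relevant chambers of $X$ by their plane $\tau$, which ranges over the $q+1$ planes through $\ell$, writing $X_\tau$ for the set of relevant chambers of $X$ having plane $\tau$. The crucial ingredient is a cross-plane opposition test: if $(Q,h,\tau)\in X_\tau$ and $(Q',h',\tau')\in X_{\tau'}$ with $\tau\neq\tau'$, then $\tau\cap\tau'=\ell$, so $Q\notin\tau'$, $Q'\notin\tau$ and $h\cap h'\subseteq\ell$, whence these two chambers are opposite exactly when the points $h\cap\ell$ and $h'\cap\ell$ are distinct; as $X$ contains no opposite pair, any two relevant chambers of $X$ lying in different planes through $\ell$ have the same intersection point with $\ell$. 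Now I would split into two cases. If at most one plane $\tau_0$ through $\ell$ carries a relevant chamber of $X$, then those chambers are incident point--line pairs of $\tau_0\cong\PG(2,q)$ whose point avoids $\ell$, and there are at most $(q^2+q+1)(q+1)-(q+1)^2=(q+1)q^2$ of these. If at least two planes through $\ell$ carry a relevant chamber, then by the opposition test there is a single point $A$ on $\ell$ with $h\cap\ell=\{A\}$ for every relevant chamber of $X$ (fix two contributing planes and a relevant chamber in each, compare them, and then compare every other relevant chamber with one of the two); counting chambers $(Q,h,\tau)$ with $A\in h\subseteq\tau$, $\ell\subseteq\tau$ and $Q\in h\setminus\{A\}$ gives $q^2+q$ choices for the line $h\neq\ell$ through $A$, each forcing $\tau=\erz{\ell,h}$, followed by $q$ choices for $Q$, i.e.\ $q^2(q+1)$ in total. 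In either case at most $(q+1)q^2$ relevant chambers lie in $X$.

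Part (c) is the image of (b) under the point--plane duality of $\PG(3,q)$. This duality sends chambers to chambers, and the relation ``opposite'' is self-dual --- one checks that the three conditions defining it for a pair of chambers coincide with the three conditions for the dual pair --- so the image of $X$ is again a maximal family of pairwise non-opposite chambers of a projective $3$-space; moreover the condition ``$Q\in\ell$ and $\ell\not\subseteq\tau$'' on a chamber translates to the condition ``the point is not on the dual line and the plane contains the dual line'' on the dual chamber with respect to the dual of $\ell$. Applying (b) to this dual configuration then gives the bound $(q+1)q^2$ for (c).

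The main obstacle I anticipate is the cross-plane opposition test in (b) together with the ensuing case distinction; parts (a) and (c) are essentially bookkeeping.
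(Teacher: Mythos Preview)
Your proof is correct and follows the paper's approach in spirit: part (a) is identical (a pure count), and part (c) is obtained by duality in both. For part (b), the paper gives a slightly more streamlined argument: rather than splitting according to how many planes $\tau$ occur, it observes directly that the lines $h$ of any two relevant chambers must intersect (trivially when the chambers share a plane, and otherwise by exactly your cross-plane opposition test), invokes the standard fact that pairwise intersecting lines in $\PG(3,q)$ all lie in a common plane or all pass through a common point, and notes that in either case there are at most $q^2+q$ possible lines $h$ and hence at most $(q^2+q)q$ chambers. Your two cases correspond precisely to these two alternatives, so the arguments are essentially equivalent; the paper just avoids the explicit case distinction.
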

\begin{proof}
(a) There are only $(q+1)^3$ chambers $(Q,h,\tau)$ with $Q\in\ell\subseteq \tau$, so at most this many in $X$.

(b) Consider the chambers $(Q,h,\tau)\in X$ with $Q\notin\ell$ and $\ell\subseteq \tau$. For any two such chambers, their two lines $h$ must meet (this is clear if the two chambers have the same plane, and otherwise it follows from the fact that the two chambers are not in general position). Hence all these lines lie in the same plane $\pi$ on $\ell$ or pass through the same point $P$ of $\ell$. In either case, there are at most $q^2+q$ different lines $h$ and thus at most $(q^2+q)q$ such chambers.

(c) This is the statement dual to the one in (b).
\end{proof}

\begin{lemma}\label{wievieletreffenl0Teil1}
Let $\ell_0$ be a line and let $\T$ be the set consisting of all chambers $(Q,h,\tau)$ of $X$ that satisfy $h\cap \ell_0\not=\emptyset$, $Q\notin \ell_0$ and $\ell_0\not\subseteq \tau$. Suppose that $q\ge 5$ and that $X\not=\F{x}$ for every point and every plane $x$. Then
$|\T|\le (q+1)(8q-6)q$.
\end{lemma}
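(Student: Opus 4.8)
The plan is to fix the line $\ell_0$ and analyze the chambers $(Q,h,\tau)\in\T$ according to the point $P=h\cap\ell_0$ through which their line passes. For a point $P$ on $\ell_0$, write $\T_P$ for the set of chambers of $\T$ whose line meets $\ell_0$ in $P$; then $|\T|=\sum_{P\in\ell_0}|\T_P|$, a sum over $q+1$ points. So it suffices to show $|\T_P|\le (8q-6)q$ for each $P$, and the whole estimate reduces to a local analysis around one point of $\ell_0$.

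First I would split $\T_P$ according to the plane $\tau$ of the chamber. Two chambers of $\T_P$ with the same plane $\tau$ have lines $h,h'$ through $P$ lying in $\tau$; since $\ell_0\not\subseteq\tau$ (by definition of $\T$) and $\ell_0$ passes through $P$, the line $\ell_0$ meets $\tau$ only in $P$. The key structural input is the non-opposition condition: any two chambers of $X$ are non-opposite, so for $(Q,h,\tau),(Q',h',\tau')\in\T$ their lines $h,h'$ meet, or $Q\in\tau'$, or $Q'\in\tau$. Using this I would argue that the lines $h$ occurring in $\T_P$ cannot be too spread out: either they all lie in a common plane through $P$, or they all lie in a common plane not through $P$ but then each meets $\ell_0$ — wait, more carefully, I expect a case distinction showing that the set of lines through $P$ that occur is contained in a "pencil-like" configuration (all in one plane, or all in one of a bounded number of planes), giving $O(q)$ lines; similarly the planes $\tau$ on each such line that occur are $O(1)$ in number on average, and the points $Q$ on each line are $O(1)$, OR some of these bounds fail only in the situation that would force $X$ to be large — which is excluded either by maximality combined with the hypothesis $X\neq\F{x}$, or absorbed into the constant $8q-6$.

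The hard part will be organizing the case analysis cleanly. The natural dichotomy is: (i) the lines $h$ of chambers in $\T_P$ all lie in a single plane $\sigma$ through $P$ — then there are at most $q+1$ such lines $h$ (other than $\ell_0$, and in fact $\ell_0\not\subseteq\tau$ forces $\ell_0\ne h$ anyway, and $\ell_0$ need not lie in $\sigma$), and for each such $h$ I must bound the number of chambers of $\T$ on it, which is where Lemma \ref{incidentmitGerade}(c) and a dual-type argument enter — or (ii) the lines are not coplanar, in which case the non-opposition constraint forces every $Q$ of every chamber in $\T_P$ to lie in a common plane or on a common line, again cutting down the count to $O(q)$. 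I would expect that in case (i) one gets roughly $q+1$ lines times a constant (around $2q$) chambers per line, and in case (ii) the transversal structure gives an even smaller bound; the coefficient $8q-6$ presumably comes from adding a handful of such contributions plus a few exceptional chambers (e.g. those sharing a plane with $\ell_0$ in a degenerate way) that must be counted separately. The main obstacle is ruling out, via maximality and the standing hypothesis $X\ne\F{x}$, the one configuration in which a whole plane's worth of chambers through $P$ could occur without being controlled — I expect that such a configuration would force $X\subseteq\F{P}$ and hence $X=\F{P}$, contradicting the hypothesis.

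Throughout, I would lean on the earlier lemmata: Lemma \ref{incidentmitGerade} to bound chambers whose line lies in a given plane or passes through a given point relative to a fixed line, and Lemma \ref{EigenschaftenPunkt} to bound $|X\cap\F{R}|$ whenever the analysis concentrates the chambers of $\T_P$ in the star of a single point or plane $R$ with $X\ne\F{R}$. Combining the per-point bound $|\T_P|\le(8q-6)q$ over the $q+1$ points of $\ell_0$ then yields $|\T|\le(q+1)(8q-6)q$ as claimed; the hypothesis $q\ge 5$ enters only to make the various $O(1)$ estimates (like "$\le q+1$ lines in a plane" versus small additive exceptions) combine into the stated coefficient rather than dominating it.
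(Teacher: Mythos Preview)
Your decomposition $\T=\bigcup_{P\in\ell_0}\T_P$ and the target $|\T_P|\le(8q-6)q$ are exactly right, but the mechanism you propose for bounding $|\T_P|$ does not work. The crucial point you are missing is that \emph{non-opposition gives no constraint whatsoever between two chambers of $\T_P$}: any two lines $h,h'$ occurring in $\T_P$ both pass through $P$, so they meet, and the chambers are automatically non-opposite regardless of their points and planes. Consequently your dichotomy ``(i) the lines of $\T_P$ are coplanar / (ii) they are not'' carries no information: in case (ii) nothing forces the points $Q$ to lie on a common line or plane, contrary to what you assert. A priori $\T_P$ could contain up to $(q^2+q)\cdot q\cdot q\sim q^4$ chambers, and nothing internal to $\T_P$ cuts this down.

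The paper's idea is the opposite of yours: constrain $\T_P$ using chambers of $\T$ whose lines do \emph{not} pass through $P$. The correct dichotomy is on whether, for a given $P$, one can find two chambers $(Q_1,h_1,\tau_1),(Q_2,h_2,\tau_2)\in\T$ with $P\notin h_1,h_2$ and $h_1\cap h_2=\emptyset$. If such a pair exists (Case~1), then for every $(Q,h,\tau)\in\T_P$ the line $h$ is skew to at least one of $h_1,h_2$, and non-opposition forces $Q\in\tau_i$ or $Q_i\in\tau$; a direct count over the four combinations (and separately over the lines $h$ meeting $h_1$ or $h_2$) gives $|\T_P|\le 2q(2q-1)+4q(q-1)=(8q-6)q$. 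If no such pair exists for some $P$ (Case~2), then all lines of chambers in $\T\setminus\T_P$ mutually meet, hence lie in a common plane or through a common point $R$; now Lemma~\ref{EigenschaftenPunkt} bounds both $|\T\setminus\T_P|$ and $|\T_P|$ by $(q+1)^3+q^2(2q+1)$, and the sum is below $(q+1)(8q-6)q$ for $q\ge 5$. Your instinct to invoke Lemma~\ref{EigenschaftenPunkt} and the hypothesis $X\neq\F{x}$ was right, but it enters in this second case, not as a way to rule out a large coplanar pencil inside $\T_P$.
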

\begin{proof}
For each $P\in\ell_0$ let us denote by $\T_p$ the subset of $\T$ that consists of those chambers of $\T$ whose line contains $P$. Then $\T$ is the disjoint union of the sets $\T_P$ with $P\in\ell_0$. We distinguish two cases.

Case 1. We assume for every point $P$ of $\ell_0$ that there exist two chambers $(Q_i,h_i,\tau_i)\in\T$, $i=1,2$, with $h_1\cap h_2=\emptyset$ and $P\notin h_1,h_2$.

Let $P$ be a point of $\ell_0$ and let $(Q_1,h_1,\tau_1)$ and $(Q_2,h_2,\tau_2)$ be chambers of $\T$ with $h_1\cap h_2=\emptyset$ and $P\notin h_1,h_2$. Since $h_1$ and $h_2$ are skew and both meet $\ell_0$, we see that $h_1$ and $h_2$ span distinct planes with $\ell_0$. We first determine an upper bound for the number of chambers $(Q,h,\tau)$ in $\T_P$ for which $h\cap h_2\not=\emptyset$. For such a chamber we have $h\cap h_1=\emptyset$ and therefore either $Q_1\in \tau$ or $Q\in \tau_1$. If $Q_1\in \tau$, then $\tau$ is the plane spanned by $h$ and $Q_1$, and $Q$ can be any of the $q$ points of $h$ that are different from $P$. If $Q_1\notin \tau$, then $Q\in \tau_1$ and hence $Q$ is the intersection point of $h$ and $\tau_1$. In this case $\tau$ can be any of the $q-1$ planes on $h$ that contain neither $\ell_0$ nor $Q_1$. Since $P$ lies on $q$ lines other than $\ell_0$ that meet $h_2$, it follows that at most $q(2q-1)$ chambers of $\T_P$ have a line that meets $h_2$. The same argument shows that $\T_P$ contains at most $q(2q-1)$ chambers whose line meets $h_1$.

Now we consider only the chambers $(Q,h,\tau)\in \T_P$ for which $h$ misses $h_1$ and $h_2$. Since $(Q,h,\tau)$ is not in general position to $(Q_i,h_i,\tau_i)$ we must have $Q_i\in \tau$ or $Q\in \tau_i$ for $i=1$ and $i=2$.

If $Q_1,Q_2\in \tau$, then $h$ meets the line $Q_1Q_2$, and $\tau$ is the plane spanned by the lines $h$ and $Q_1Q_2$. There are at most $(q-1)q$ chambers in $\T_P$ with this property, since there are only $q-1$ lines $h$ on $P$ that meet $Q_1Q_2$ and miss $h_1$ and $h_2$, and for each line $h$ there are $q$ choices for $Q$. Dually, there are at most $(q-1)q$ chambers in $\T_P$ whose point $Q$ lies in $\tau_1\cap \tau_2$. If $Q_1\in \tau$ and $Q\in \tau_2$, then $Q=h\cap \tau_2$ and $\tau$ is the plane spanned by $h$ and $Q_1$. There are at most $q^2-q$ such chambers in $\T_P$, since this is the number of lines $h$ on $P$ that miss $h_1$ and $h_2$. The same argument gives the bound $q^2-q$ for the number of chambers $(Q,h,\tau)$ under consideration with $Q_2\in \tau$ and $Q\in \tau_1$.

We have shown that $|\T_P|\le 2q(2q-1)+4q(q-1)=(8q-6)q$ for all $P\in\ell$. Hence $|\T|\le (q+1)(8q-6)q$.

Case 2. We assume that there exists a point $P\in\ell_0$ that does not have the property of Case 1. Hence, if we consider all chambers $(Q,h,\tau)\in \T$ with $P\notin h$, then the lines of these chambers mutually meet. Consequently there exists a plane or a point that is incident with all these lines. Therefore Lemma \ref{EigenschaftenPunkt} shows that $|\T|-|\T_P|\le (q+1)^3+q^2(2q+1)$. The same lemma shows that $|\T_P|\le (q+1)^3+q^2(2q+1)$. Hence $|\T|\le 2(q+1)^3+2q^2(2q+1)$. Since $q\ge 5$, this bound is better than the one in the statement.
\end{proof}

\begin{lemma}\label{wievieletreffenl0}
Suppose that $q\ge 5$ and $X\not=\F{x}$ for every point and every plane $x$. Then for every line $\ell$ there exist at most $c:=11q^3+7q^2-3q+1$ chambers in $X$ whose lines meet $\ell$.
\end{lemma}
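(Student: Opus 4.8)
The strategy is to fix a line $\ell$ and sort the chambers $(Q,h,\tau)\in X$ whose line $h$ meets $\ell$ according to the incidence position of $Q$ and $\tau$ with respect to $\ell$. There are exactly four cases to consider: (i) $Q\in\ell$ and $\ell\subseteq\tau$; (ii) $Q\notin\ell$ and $\ell\subseteq\tau$; (iii) $Q\in\ell$ and $\ell\not\subseteq\tau$; and (iv) $Q\notin\ell$ and $\ell\not\subseteq\tau$. (Note that if $Q\notin\ell$ and $\ell\not\subseteq\tau$ but still $h$ meets $\ell$, this is precisely the defining situation of the set $\T$ in Lemma \ref{wievieletreffenl0Teil1}, applied with $\ell_0=\ell$.) Every chamber of $X$ whose line meets $\ell$ falls into exactly one of these four classes, so the total count is the sum of the four individual bounds.

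The first three bounds are immediate from Lemma \ref{incidentmitGerade}: case (i) contributes at most $(q+1)^3$, case (ii) at most $(q+1)q^2$, and case (iii) at most $(q+1)q^2$ chambers of $X$. The fourth case is handled by Lemma \ref{wievieletreffenl0Teil1}: since $q\ge 5$ and $X\neq\F{x}$ for every point or plane $x$, that lemma (with $\ell_0=\ell$) gives at most $(q+1)(8q-6)q$ chambers. Adding these four contributions yields the bound
\[
(q+1)^3+2(q+1)q^2+(q+1)(8q-6)q,
\]
and the remaining work is simply to expand this polynomial in $q$ and verify that it equals (or is at most) $c=11q^3+7q^2-3q+1$.

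I do not anticipate any genuine obstacle here: the lemma is essentially a bookkeeping step that assembles the three parts of Lemma \ref{incidentmitGerade} with Lemma \ref{wievieletreffenl0Teil1}. The only point requiring a little care is to make sure the four cases are genuinely exhaustive and disjoint for chambers whose line meets $\ell$, and that the hypotheses of Lemma \ref{wievieletreffenl0Teil1} (namely $q\ge5$ and $X$ not of the form $\F{x}$) are exactly the hypotheses available here, which they are. Expanding: $(q+1)^3=q^3+3q^2+3q+1$, $2(q+1)q^2=2q^3+2q^2$, and $(q+1)(8q-6)q=8q^3+2q^2-6q$, whose sum is $11q^3+7q^2-3q+1$, matching $c$ exactly.
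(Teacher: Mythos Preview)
Your proposal is correct and follows essentially the same approach as the paper: the paper also combines the three parts of Lemma~\ref{incidentmitGerade} (summing to $(q+1)^3+2(q+1)q^2=3q^3+5q^2+3q+1$) with the bound $(q+1)(8q-6)q$ from Lemma~\ref{wievieletreffenl0Teil1}, arriving at $c=11q^3+7q^2-3q+1$. The only difference is presentational---the paper groups your cases (i)--(iii) into a single ``point on $\ell$ or plane through $\ell$'' count, while you keep the four cases explicit.
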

\begin{proof}
Let $\ell$ be a line. Lemma \ref{incidentmitGerade} shows that at most $2(q+1)q^2+(q+1)^3$ chambers of $X$ have their point on $\ell$ or their plane through $\ell$. This number is equal to $3q^3+5q^2+3q+1$. Therefore Lemma \ref{wievieletreffenl0Teil1} proves the statement.
\end{proof}

\begin{lemma}\label{twoskewlines}
If $q\ge 5$ and if $X$ contains two chambers whose lines are skew, then $|X|\le 9(q+1)(5q^2+1)$.
\end{lemma}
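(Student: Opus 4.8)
The plan is to fix two chambers $c_1=(Q_1,h_1,\tau_1)$ and $c_2=(Q_2,h_2,\tau_2)$ of $X$ with $h_1$ and $h_2$ skew, and then to sort all chambers $(P,\ell,\pi)\in X$ according to how $\ell$ meets $h_1$ and $h_2$. Since $c_1$ and $c_2$ are themselves non-opposite, $\ell$ cannot be skew to both $h_1$ and $h_2$ — if it were, then the non-opposition of $(P,\ell,\pi)$ with each $c_i$ would force $P\in\tau_i$ or $Q_i\in\pi$ for each $i$, and one checks this is still possible, so the real trichotomy comes from the line $\ell$ itself: either $\ell$ meets $h_1$, or $\ell$ meets $h_2$, or $\ell$ is skew to both. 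First I would bound the number of chambers of $X$ whose line meets $h_1$: by Lemma \ref{wievieletreffenl0} this is at most $c=11q^3+7q^2-3q+1$, and the same bound applies to those whose line meets $h_2$. So the whole difficulty is concentrated in the set $\mathcal{S}$ of chambers $(P,\ell,\pi)\in X$ with $\ell$ skew to both $h_1$ and $h_2$.

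For a chamber $(P,\ell,\pi)\in\mathcal{S}$, non-opposition with $c_i$ forces (since $\ell\cap h_i=\emptyset$) that $P\in\tau_i$ or $Q_i\in\pi$, for $i=1,2$. This splits $\mathcal{S}$ into (overlapping) pieces according to which of the two alternatives holds for $i=1$ and for $i=2$. The key observation I would use is that each such alternative is a strong incidence restriction: "$P\in\tau_i$" confines the point $P$ to a fixed plane, while "$Q_i\in\pi$" confines the plane $\pi$ to the pencil through the fixed point $Q_i$. Consider first the chambers with $P\in\tau_1$ and $P\in\tau_2$: then $P$ lies on the line $\tau_1\cap\tau_2$, so all these chambers lie in $\F{g}$ for the line $g=\tau_1\cap\tau_2$, and since $X\ne\F{x}$ for any point or plane $x$, Lemma \ref{incidentmitGerade} (applied three times, for $P\in g\subseteq\pi$, $P\notin g$ with $g\subseteq\pi$, and $P\in g$ with $g\not\subseteq\pi$) bounds this contribution by $2(q+1)q^2+(q+1)^3=3q^3+5q^2+3q+1$. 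Dually, the chambers with $Q_1\in\pi$ and $Q_2\in\pi$ lie in $\F{g'}$ for the line $g'=Q_1Q_2$, with the same bound. That leaves the "mixed" chambers, with $P\in\tau_1$, $Q_2\in\pi$ (and the symmetric case). Here $P$ is forced onto the plane $\tau_1$ and $\pi$ onto the pencil through $Q_2$; I would then invoke Lemma \ref{wievieletreffenl0Teil1} or a direct count in the spirit of that lemma — the line $\ell$ of such a chamber lies in a plane through $Q_2$ and meets the plane $\tau_1$, and one argues these lines are confined to few pencils — to get a bound of the same cubic order, $O(q^3)$, on the mixed part.

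Adding the pieces: $|X|\le 2c + (\text{two }\F{g}\text{-bounds}) + (\text{two mixed bounds})$, all of which are $O(q^3)$ with explicit constants, and the arithmetic will collapse to $|X|\le 9(q+1)(5q^2+1)=45q^3+45q^2+9q+9$ once $q\ge 5$ (the hypothesis $q\ge 5$ is what makes the lower-order terms absorbable, exactly as it was used in Lemma \ref{wievieletreffenl0Teil1}). The main obstacle I anticipate is the mixed case $P\in\tau_1,\ Q_2\in\pi$: unlike the two "pure" cases it does not obviously reduce to an $\F{x}$-type configuration, so I would need to track the interaction between the constraint on the point and the constraint on the plane carefully, most likely by further subdividing on whether the line $\ell$ of the chamber meets the line $\tau_1\cap\pi$ and using that two non-opposite chambers in this family have lines that are either concurrent or coplanar — this is where a genuine (if short) geometric argument is required rather than a bookkeeping count, and where getting the constant small enough to land under $45q^3+45q^2+9q+9$ will take some care.
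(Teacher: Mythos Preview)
Your overall strategy is right---use Lemma~\ref{wievieletreffenl0} to bound the chambers whose line meets a fixed line, and then handle the rest via the non-opposition constraint $P\in\tau_i$ or $Q_i\in\pi$---but the mixed case you flag as ``the main obstacle'' is a genuine gap, not just a bookkeeping nuisance. If $P\in\tau_1$ and $Q_2\in\pi$ (and $\ell$ is skew to $h_1,h_2$), then $\ell\not\subseteq\tau_1$ (else $\ell$ would meet $h_1$) and $Q_2\notin\ell$ (else $\ell$ would meet $h_2$), so the chamber is determined by the pair $(P,\ell)$ with $P\in\tau_1$ and $\ell$ any line through $P$ not in $\tau_1$; there are on the order of $q^4$ such pairs. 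No incidence count of the kind you are invoking will push this below $q^4$, and none of Lemmas~\ref{EigenschaftenPunkt}--\ref{wievieletreffenl0} applies, because neither the point nor the plane of such a chamber is pinned to a single line.

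The paper closes this gap with one extra idea: after fixing the two skew-line chambers, it observes that if $|X|>2c$ then Lemma~\ref{wievieletreffenl0} guarantees a \emph{third} chamber $(P_3,\ell_3,\pi_3)\in X$ whose line is skew to both $\ell_1$ and $\ell_2$. Now for any chamber $(P,\ell,\pi)\in X$ with $\ell$ skew to all three $\ell_i$, the three constraints ``$P\in\pi_i$ or $P_i\in\pi$'' and the pigeonhole principle force either $P\in\pi_i\cap\pi_j$ for some pair $i\neq j$, or $P_i,P_j\in\pi$ for some pair. In either case the chamber has its point on, or its plane through, one of six fixed lines, and Lemma~\ref{incidentmitGerade}~(a),(b) (or dually (a),(c)) gives the bound $d=(q+1)q^2+(q+1)^3$ for each of the six lines. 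The total is $3c+6d=9(q+1)(5q^2+1)$ on the nose---no absorption of lower-order terms is needed, and the hypothesis $q\ge 5$ is used only through Lemma~\ref{wievieletreffenl0}. The third skew line is precisely what kills your mixed case.
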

\begin{proof}
Suppose that $q\ge 5$ and $X$ has two chambers $(P_1,\ell_1,\pi_1)$ and $(P_2,\ell_2,\pi_2)$ with $\ell_1\cap\ell_2=\emptyset$. Then $X\not=\F{y}$ for every point and every plane $y$. Defining the number $c$ as in Lemma \ref{wievieletreffenl0}, it follows that every line meets the line of at most $c$ chambers of $X$. In view of the bound that we want to show for $|X|$, we may assume that $|X|>2c$. Then there exists a chamber $(P_3,\ell_3,\pi_3)$ in $X$ such that $\ell_3$ is skew to $\ell_1$ and to $\ell_2$. Lemma \ref{wievieletreffenl0} shows that $X$ contains at most $3c$ chambers whose lines meets at least one of the lines $\ell_1$, $\ell_2$, and $\ell_3$. Now we will determine an upper bound for the number of chambers of $X$ whose lines are skew to $\ell_1$, $\ell_2$ and $\ell_3$.

Let $(P,\ell,\pi)$ be such a chamber of $X$. For each $i\in\{1,2,3\}$ we have that $Q\in\pi_i$ or $Q_i\in \pi$. Hence $\pi$ contains at least two of the points $Q_i$, or $Q$ lies in at least two of the planes $\pi_i$. Notice that the fact that the lines $\ell_1$, $\ell_2$, and $\ell_3$ are pairwise skew implies that the points $Q_1$, $Q_2$, $Q_3$ are pairwise distinct and that the planes $\pi_1$, $\pi_2$, and $\pi_3$ are pairwise distinct.

The number of chambers of $X$ whose planes contain two given points of $\{Q_1,Q_2,Q_3\}$ is at most $d:=(q+1)q^2+(q+1)^3$ by Lemma \ref{incidentmitGerade}. Hence there are at most $3d$ chambers in $X$ whose plane contains two of the points $Q_i$. Dually, there exist at most $3d$ chambers in $X$ whose point is contained in two of the planes $\pi_i$.

Therefore $|X|\le 3c+6d=9(q+1)(5q^2+1)$. This proves the lemma.
\end{proof}

\begin{proof}[Proof of Theorem \ref{stabilityA3}] Let $X$ be a maximal family of pairwise non-opposite chambers of $PG(3,q)$ with $q\ge 43$. If $X$ contains two chambers, whose lines are skew, then Lemma \ref{twoskewlines}  shows that $|X|\le 9(q+1)(5q^2+1)$. If the lines of the chambers of $X$ mutually meet, then they all pass through a common point $P$ or all lie in a common plane $\pi$. In this case we have $X\subseteq\F{P}$ or $X\subseteq\F{\pi}$ and maximality of $X$ shows that $X=\F{P}$ or $X=\F{\pi}$.
\end{proof}

\begin{corollary}\label{stability}
If $q\ge 43$, then every maximal set of chambers of $\PG(3,q)$ that are mutually not opposite has either $(q^2+q+1)(q+1)^2$ elements or at most $9(q+1)(5q^2+1)$ elements.
\end{corollary}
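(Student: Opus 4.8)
The plan is to deduce this immediately from Theorem~\ref{stabilityA3}. Let $X$ be a maximal set of chambers of $\PG(3,q)$ that are mutually non-opposite, with $q\ge 43$. Theorem~\ref{stabilityA3} gives a dichotomy: either $X=\F{x}$ for some point or plane $x$, or $|X|\le 9(q+1)(5q^2+1)$. In the second case we are already done, so the only thing left to check is that in the first case $|X|$ equals the stated value $(q^2+q+1)(q+1)^2$.

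For this I would simply count the chambers of $\F{x}$. If $x$ is a point, then $\F{x}$ consists of all chambers $(P,\ell,\pi)$ whose line $\ell$ passes through $x$: there are $q^2+q+1$ lines $\ell$ through $x$, each line lies in $q+1$ planes $\pi$, and each flag $(\ell,\pi)$ extends to $q+1$ incident points $P$ on $\ell$, giving $(q^2+q+1)(q+1)^2$ chambers in total. The case where $x$ is a plane is dual (a plane contains $q^2+q+1$ lines, each incident with $q+1$ points, and each such point--line flag lies in $q+1$ planes containing the line), giving the same count. This is exactly the independence number $(q^2+q+1)(q+1)^2$ already recorded in the introduction, so no genuine obstacle arises here; the step is purely bookkeeping.

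There is essentially no hard part: all the real work has been carried out in Theorem~\ref{stabilityA3} (and the lemmas feeding it). The corollary is just a restatement of that theorem together with the elementary fact $|\F{x}|=(q^2+q+1)(q+1)^2$.
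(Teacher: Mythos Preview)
Your proposal is correct and matches the paper's approach: the corollary is stated immediately after Theorem~\ref{stabilityA3} with no separate proof, since it follows at once from that theorem together with the known size $|\F{x}|=(q^2+q+1)(q+1)^2$ already recorded in the introduction. Your bookkeeping count of $|\F{x}|$ is fine and nothing is missing.
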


\textsc{Remark}. With some effort one can improve the bound in Lemma \ref{wievieletreffenl0} but I was not able to prove the desired result for all $q$, so I omit these better bounds.

These results become becomes in a way more natural when translated by the Klein-correspondence to the hyperbolic quadric $Q^+(5,q)$. Here a chamber is a triple $(P,\pi,\tau)$ consisting of a point $P$, a Greek plane $\pi$ and a Latin plane $\tau$ such that $P$ lies in the planes $\pi$ and $\tau$. Two chambers $(P_i,\pi_i,\tau_i)$ are opposite, if the points $P_1$ and $P_2$ are non-collinear and if $\pi_1\cap \tau_2=\pi_2\cap \tau_1=\emptyset$. Since points and planes of projective 3-space translate by Klein-correspondence both to planes, the results of Theorem \ref{stabilityA3} and Corollary \ref{stability} translate into the following slightly more pleasant formulation.

\begin{corollary}
If $q\ge 43$, then a set of chambers of $Q^+(5,q)$ that are mutually not opposite has at most $(q^2+q+1)(q+1)^2$ elements and equality occurs if and only if the set consists of all chambers whose point is contained in a given plane.
\end{corollary}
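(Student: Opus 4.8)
The plan is to observe that this corollary is Corollary~\ref{stability} read through the Klein correspondence, so the substance lies in setting up the dictionary between chambers of $\PG(3,q)$ and chambers of $Q^+(5,q)$ correctly, and then checking that the second alternative of Corollary~\ref{stability} cannot interfere for $q\ge 43$. First I would recall the Klein correspondence: it is a bijection $\ell\mapsto\hat\ell$ from the lines of $\PG(3,q)$ to the points of $Q^+(5,q)$ such that two lines of $\PG(3,q)$ meet if and only if the corresponding points of $Q^+(5,q)$ are collinear on the quadric, such that the lines through a fixed point of $\PG(3,q)$ correspond to the points of a plane of one family of generators (say the Latin planes) of $Q^+(5,q)$, and such that the lines inside a fixed plane of $\PG(3,q)$ correspond to the points of a plane of the other family (the Greek planes). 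Hence a chamber $(P,\ell,\pi)$ of $\PG(3,q)$ maps to the triple consisting of the point $\hat\ell$ of $Q^+(5,q)$, the Greek plane $\hat\pi$ and the Latin plane $\hat P$, which are mutually incident, i.e.\ to a chamber of $Q^+(5,q)$ in the sense used before the corollary; this assignment is a bijection between the chambers of the two geometries.

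Next I would check that opposite pairs go to opposite pairs. For chambers $(P_i,\ell_i,\pi_i)$ of $\PG(3,q)$ with images $(\hat\ell_i,\hat\pi_i,\hat P_i)$, the condition ``$\ell_1$ skew to $\ell_2$'' becomes ``$\hat\ell_1$ and $\hat\ell_2$ non-collinear on $Q^+(5,q)$'' by the defining property of the Klein map. The condition ``$P_1\notin\pi_2$'' holds iff there is no line through $P_1$ lying in $\pi_2$, i.e.\ iff no point of $Q^+(5,q)$ lies in both $\hat P_1$ and $\hat\pi_2$, i.e.\ iff $\hat P_1\cap\hat\pi_2=\emptyset$; symmetrically ``$P_2\notin\pi_1$'' becomes $\hat\pi_1\cap\hat P_2=\emptyset$. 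Matching the Greek plane $\hat\pi_i$ with the second coordinate and the Latin plane $\hat P_i$ with the third, these are exactly the disjointness conditions in the definition of opposite chambers of $Q^+(5,q)$ quoted above.

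Finally I would conclude. A set $S$ of mutually non-opposite chambers of $Q^+(5,q)$ corresponds under the bijection to a set of mutually non-opposite chambers of $\PG(3,q)$; extend the latter to a maximal such set $X$. By Corollary~\ref{stability}, $|X|=(q^2+q+1)(q+1)^2$ or $|X|\le 9(q+1)(5q^2+1)$, and for $q\ge 43$ the second quantity is strictly smaller than the first (comparing the quartic leading term $q^4$ with the cubic bound $\approx 45q^3$, or an explicit check at $q=43$, suffices). Hence $|S|\le(q^2+q+1)(q+1)^2$, and if equality holds then $|X|$ attains the larger value, so Theorem~\ref{stabilityA3} gives $X=\F{x}$ for a point or plane $x$ of $\PG(3,q)$; as $S\subseteq X$ has the same cardinality, $S=X$. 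Translating back, $\F{x}$ for a point of $\PG(3,q)$ is the set of all chambers of $Q^+(5,q)$ whose point lies in a fixed Latin plane, and $\F{x}$ for a plane of $\PG(3,q)$ is the same with a Greek plane, so the extremal sets are precisely the sets of all chambers whose point is contained in a given plane (and each such set is mutually non-opposite and has $(q^2+q+1)(q+1)^2$ chambers, as already recorded for $\F{x}$). The only point needing care --- the modest ``obstacle'' here --- is to keep the roles point/line/plane of $\PG(3,q)$ consistently aligned with point/Latin-plane/Greek-plane of $Q^+(5,q)$ throughout, so that the three collinearity and disjointness conditions in the two definitions of ``opposite'' are paired up correctly.
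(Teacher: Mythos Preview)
Your proposal is correct and follows exactly the route the paper indicates: the paper does not give a separate proof of this corollary but simply states that it is the Klein-correspondence translation of Theorem~\ref{stabilityA3} and Corollary~\ref{stability}, and you have carefully spelled out that dictionary (lines $\leftrightarrow$ points of $Q^+(5,q)$, points and planes $\leftrightarrow$ Latin and Greek planes, oppositeness $\leftrightarrow$ oppositeness) and verified the numerical inequality $9(q+1)(5q^2+1)<(q^2+q+1)(q+1)^2$ for $q\ge 43$ that makes the second alternative irrelevant for the equality case.
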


\section{The chromatic number}

In this section we prove part (c) of Theorem \ref{mainPG} and for this we use the following notation. Let $\Gamma$ be the graph whose vertices are the chambers of $\PG(3,q)$ with two vertices being adjacent when the corresponding chambers are opposite. Let $V$ be the vertex set of $\Gamma$.

First we give an example to show that the chromatic number is at most $q^2+q$.

\begin{example}
Let $\ell$ be a line of $\PG(3,q)$, let $\pi$ be a plane on $\ell$, and let $C$ be the set consisting of the $q^2$ points of $\pi$ that do not lie on $\ell$ and of the $q$ planes on $\ell$ that are distinct from $\pi$. Then every line of $\PG(3,q)$ is incident with at least one element of $C$. Hence every chamber of $\PG(3,q)$ lies in $\F{x}$ for at least one element $x$ of $C$. This shows that the vertex set of $\Gamma$ can be covered with $|C|=q^2+q$ independent sets of $\Gamma$. This implies that the chromatic number of $\Gamma$ is at most $q^2+q$.
\end{example}

\begin{theorem}\label{ThPGaundb}
For $q\ge 47$ the chromatic number of the graph $\Gamma$ defined in the beginning of this section is $q^2+q$.
\end{theorem}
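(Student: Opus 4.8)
We already know from the example above that $\chi(\Gamma)\le q^2+q$, so the task is to prove the reverse inequality $\chi(\Gamma)\ge q^2+q$. I would argue by contradiction, assuming that $\Gamma$ admits a proper colouring with $k\le q^2+q-1$ colour classes $C_1,\dots,C_k$ and exploiting the stability result Theorem~\ref{stabilityA3}. Throughout put $\alpha:=(q^2+q+1)(q+1)^2$, so $\alpha$ is the independence number of $\Gamma$ and the common cardinality of the sets $\F{x}$, put $\beta:=9(q+1)(5q^2+1)$, and recall that $\Gamma$ has $(q^2+1)\alpha$ vertices.

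The first step is to sort the colour classes. Extending any independent set to a maximal one and applying Theorem~\ref{stabilityA3}, every class with more than $\beta$ vertices is contained in some $\F{x}$ with $x$ a point or a plane; call these classes \emph{large} and the others \emph{small}. Let $t$ be the number of large classes, let $\mathcal Y$ be the set of distinct points and planes $x$ obtained in this way, and set $m:=|\mathcal Y|\le t$. If $m\ge q^2+q$ we are done immediately, since $k\ge t\ge m$; so assume $m\le q^2+q-1$.

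The second step records that the large classes miss a controlled number of chambers. Call a line $\ell$ of $\PG(3,q)$ \emph{free} if no element of $\mathcal Y$ is incident with it, and let $f$ be the number of free lines. A chamber whose line is free lies in no $\F{x}$ with $x\in\mathcal Y$, hence in a small class; there are $f(q+1)^2$ such chambers, and being distributed among the small classes — each of size at most $\beta$ — they force at least $f(q+1)^2/\beta$ small classes. Hence
\[
k\ \ge\ m+\frac{f(q+1)^2}{\beta}.
\]
Therefore the theorem reduces to the following purely combinatorial claim about $\PG(3,q)$: \emph{if $\mathcal Y$ consists of $m\le q^2+q-1$ points and planes, then at least $(q^2+q-m)\,\beta/(q+1)^2$ lines are free}, or equivalently the sets $\F{x}$ with $x\in\mathcal Y$ cover at most $(q^2+1)\alpha-(q^2+q-m)\beta$ of the chambers. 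Granting this, the displayed inequality yields $k\ge q^2+q$, contradicting $k\le q^2+q-1$.

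Proving this covering estimate is the crux and the step I expect to be the main obstacle. The point is to bound how efficiently points and planes of $\PG(3,q)$ can jointly cover the lines: a point accounts for the $q^2+q+1$ lines through it and a plane for the $q^2+q+1$ lines inside it, so a first bound is $m(q^2+q+1)$ covered lines, but the overlaps are substantial. I would split $\mathcal Y$ into its set of $a$ points and its set of $b$ planes and use that two points determine a unique common line, two planes share a unique common line, an incident point--plane pair shares exactly $q+1$ lines while a non-incident one shares none, and — most importantly — at most $q+1$ points are collinear and at most $q+1$ planes pass through a common line. These rigidity constraints prevent the $m$ flats from covering anywhere near $m(q^2+q+1)$ lines unless $\mathcal Y$ is essentially the configuration of the example preceding this theorem (the $q^2$ points of a plane off a line, together with the remaining $q$ planes through that line). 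Turning this structural picture into the quantitative bound above, with enough slack to absorb all the overlap terms, is precisely where the hypothesis $q\ge 47$ enters.
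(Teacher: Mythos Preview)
Your overall architecture matches the paper's: sort colour classes into large (contained in some $\F{x}$) and small (size $\le\beta$), let the large ones determine a set $\mathcal Y$ of points and planes, and then show that $\mathcal Y$ cannot cover enough lines unless $|\mathcal Y|\ge q^2+q$. The reduction to the covering problem is correct.

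The genuine gap is the covering estimate itself, which you explicitly leave unproven. The paper does \emph{not} obtain it by the pairwise-overlap analysis you sketch; it quotes an external result (Theorem~2.2 of \cite{MetschHowMany}) stating that a set $A$ of points in $\PG(3,q)$ lies on at most $\theta_2+|A|q^2$ lines, and dually a set $B$ of planes contains at most $\theta_2+|B|q^2$ lines. With $m=|A|+|B|$ this bounds the number of non-free lines by $2\theta_2+mq^2$, hence the number of covered chambers by $(2\theta_2+mq^2)(q+1)^2$. Your proposed approach via ``two points determine a unique common line'' and similar pairwise intersections gives at best something like $m\theta_2-\binom{m}{2}$ covered lines, which for $m$ near $q^2+q$ is vastly too weak (the subtracted term is of order $q^4$ against a main term of order $q^4$, and the true answer is also of order $q^4$, so crude inclusion--exclusion loses all control). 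The bound $\theta_2+|A|q^2$ is a genuinely non-trivial extremal result about blocking sets, not an inclusion--exclusion computation.

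Once one has this lemma, the arithmetic is exactly what you indicate: the condition $q^2(q+1)^2>\beta$, i.e.\ $q^2(q+1)>9(5q^2+1)$, lets one replace $m$ by $\chi$ in the inequality and conclude $\chi>q^2+q-1$; this is where $q\ge 47$ is used. So the only missing idea in your proposal is knowing (or reproving) the cited line-covering bound.
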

\begin{proof}
Let $\chi$ be the chromatic number of $\Gamma$. Then there exists independent sets $G_1,\dots,G_\chi$ that partition the vertex set $V$ of $\Gamma$. From the above example we know that $\chi\le q^2+q$. For $i=1,\dots,\chi$ let $F_i$ be a maximal independent set of $\Gamma$ with $G_i\subseteq F_i$. Put $\theta_2=q^2+q+1$, $e_0=\theta_2(q+1)^2$ and $e_1=9(q+1)(5q^2+1)$. From Corollary \ref{stability} we know for each $i$ that $|F_i|=e_0$ or $|F_i|\le e_1$.

Let $I$ be the set of indices $i$ with $|F_i|=e_0$. For $i\in I$, Theorem \ref{stabilityA3} shows that there exists a point or plane $x_i$ such that $F_i=\F{x_i}$. Put $A=\{x_i\mid i\in I,\ \text{$x_i$ is a point}\}$ and $B=\{x_i\mid i\in I,\ \text{$x_i$ is a plane}\}$. It follows from Theorem 2.2 of \cite{MetschHowMany} that the number of lines of $\PG(3,q)$ that contain a point of $A$ is at most $\theta_2+|A|q^2$. Dually the number of lines of $\PG(3,q)$ that are contained in a plane of $B$ is at most $\theta_2+|B|q^2$. Since the line of every chamber of $\F{x}$ with $x\in A$ contains $x$, it follows that at most $(\theta_2+|A|q^2)(q+1)^2$ chambers are contained in $\F{x}$ for some $x\in A$, and similarly at most $(\theta_2+|B|q^2)(q+1)^2$ chambers are contained in $\F{x}$ for some $x\in B$. Since the total number of chambers is $(q^2+1)\theta_2(q+1)^2$ and since every chamber lies in at least one set $F_i$, $1\le i\le\chi$, follows that
\begin{align}\label{chischranke}
(q^2+1)\theta_2(q+1)^2&\le \left|\bigcup_{i=1}^\chi F_i\right|\le (2\theta_2+|A|q^2+|B|q^2)(q+1)^2+(\chi-|A|-|B|)e_1.
\end{align}
Since $q\ge 47$, then $q^2(q+1)^2>e_1$. Since $|A|+|B|=|I|\le\chi$, it follows that
\begin{align*}
(q^2+1)\theta_2(q+1)^2&\le (2\theta_2+\chi q^2)(q+1)^2.
\end{align*}
This implies that $\chi>q^2+q-1$ and hence $\chi=q^2+q$.
\end{proof}

\textsc{Remark}. The proof shows slightly more for $q\ge 47$. In fact using $\chi=q^2+q$ it follows from \eqref{chischranke} that $|A|+|B|>q^2+q-1$. Therefore every coloring with $\chi=q^2+q$ colors comes from a covering of the set of chambers with $q^2+q$ sets $\F{x_i}$, $i=1,\dots,q^2+q$, where each $x_i$ is a point or a plane. It follows that each line of $\PG(3,q)$ is incident with one element $x_i$.
If $L(x_i)$ is the set of lines incident with $x_i$, it follows that the sets $L(x_i)$ provide a covering of the Kneser graph of lines by independent sets. These Kneser graphs have been studied in \cite{KneserLines} where it was shown that they also have chromatic number $q^2+q$. Clearly every coloring of the Kneser graph of lines of $\PG(3,q)$ provides a coloring of the graph of chambers studied in the present paper, by replacing each color class $C$ by the set of all chambers whose line lies in $C$.
Therefore the optimal colorings in both graphs correspond to one another and therefore we refer to \cite{KneserLines} for further information.

\bibliographystyle{plain}

\end{document}